\newtheorem{theorem}{Theorem}[section]
\newtheorem{corollary}{Corollary}[section]
\newtheorem{lemma}{Lemma}[section]
\theoremstyle{definition}
\theoremstyle{remark}
\numberwithin{equation}{section}
\title[]
      {The explicit evaluations formula for Ramanujan's singular moduli and Ramanujan- Selberg continued fraction}
\author[D. J. Prabhakaran]{D. J. Prabhakaran}
\address{Department of Mathematics \\
Anna University, MIT Campus\\
Chennai- 600025\\ India}
\email{asirprabha@gmail.com}
\author[K. Ranjith kumar]{K. Ranjith kumar}
\address{Department of Mathematics \\
Anna University, MIT Campus\\
Chennai- 600025\\ India}
\email{ranjithkrkkumar@gmail.com}
\begin{document}
\begin{abstract}
At scattered places of his notebooks, Ramanujan recorded over 30 values of singular moduli $\alpha_n$. All those results were proved
by Berndt et. al by employing Weber-Ramanujan's class invariants. In this paper, we initiate to derive the explicit evaluations formula for $\alpha_{9n}$ and $\alpha_{n/9}$ by involving class invariant. For this purpose, we establish several new $P-Q$ mixed modular equations involving theta-functions. Further application of these modular equations, we derive a new formula to explicit evaluation of Ramanujan- Selberg continued fraction.
\end{abstract}


\keywords{Modular equations, singular moduli, continued fraction.}
\maketitle
\section{Introduction}
Ramanujan's general theta-function \cite{Berndt-notebook-3} is defined by
\begin{eqnarray*}
f(a,b)&=&\sum^{n=\infty}_{n=-\infty}a^{n(n+1)/2}b^{n(n-1)/2}, \ |ab|<1.
\end{eqnarray*}
By using Jacobi's fundamental factorization formula, the above theta function takes the form as follows :
\begin{eqnarray*}
f(a,b)&=&(-a;ab)_\infty(-b;ab)_\infty(ab;ab)_\infty.
\end{eqnarray*}
The following definitions of theta functions $\varphi$, $\psi$ and $f$ with $|q|<1$ are classical:
\begin{eqnarray}
\varphi(q) &=& (q,q)=\sum^{\infty}_{n=-\infty}q^{n^2}=(-q;q^2)^2_{\infty}(q^2;q^2)_{\infty},\\
\psi(q) &=& (q,q^3)=\sum^{\infty}_{n=0}q^{n(n+1)/2}=\frac{(q^2;q^2)_{\infty}}{(q;q^2)^2_{\infty}},\\
f(-q)&=&f(-q,-q^2)=\sum^{\infty}_{n=-\infty}(-1)^nq^{n(3n-1)/2}=(q;q)_{\infty},\label{classicaltheta}
\end{eqnarray}
where, $(a;q)_{\infty}=\prod^{\infty}_{n=0}\left(1-aq^n\right)$.\\
If $q=e^{2\pi i\tau},$ then, $f(-q)=q^{-1/24}\eta(\tau),$ where, $\eta(\tau)$ is classical Dedekind eta-function.\\
The ordinary or Gaussian hypergeometric function is defined by
\begin{eqnarray*}
{_2F_1}\left(
\begin{array}{ccc}
  a, & b \\
  c &
\end{array} ; \displaystyle z
\right)
&=&\sum_{n=0}^{\infty}\frac{(a)_n(b)_n}{(c)_n n!}z^n, \ \ \ \ |z|< 1
\end{eqnarray*}
where,
\begin{eqnarray*}
(a)_n&=&\left\{ \begin{array}{lll}
 a(a+1)(a+2)\cdots(a+n-1), \  n=1,2,3,...\\
 1,   \    n=0 . &
\end{array} \right.
\end{eqnarray*}
Now, we shall recall the definition of modular equation from \cite{Berndt-notebook-3}. The complete elliptic integral of the first kind $K(k)$ of modulus $k$ is defined by
\begin{align}\label{112235eq0}
K(k)=\int_{0}^{\frac{\pi}{2}}\frac{d\theta}{\sqrt{1-k^2\sin^2 \theta}}=\frac{\pi}{2}\sum^{\infty}_{n=0}\frac{\left(\frac{1}{2}\right)^2_n}{(n!)^2}k^{2n}=
\frac{\pi}{2}\varphi^2\left(e^{-\pi\frac{K'}{K}}\right),
\quad (0<k<1)
\end{align}
and let $K'=K(k'),$ where $k'=\sqrt{1-k^2}$ is represented as the complementary modulus of $k$. Let $K, K', L,$ and $L'$ denote the complete elliptic integrals of the first kind associated with the moduli $k, k', l,$ and $l'$ respectively. In case, the equality
\begin{equation} \label{eq00}
n\frac{K'}{K}=\frac{L'}{L}
\end{equation}
holds for a positive integer $n$, then a modular equation of degree $n$ is the relation between the moduli $k,$ and $l$, which is implied by equation (\ref{eq00}). Ramanujan defined his modular equation involving $\alpha,$ and $\beta$, where, $\alpha=k^2$ and $\beta=l^2$. Then we say $\beta$ is of degree $n$ over $\alpha$ if
\begin{eqnarray*}
\frac{{_2F_1}\left(1/2, 1/2; 1; 1-\beta\right)}{{_2F_1}\left(1/2, 1/2; 1; \beta\right)} &=& n\frac{{_2F_1}\left(1/2, 1/2; 1; 1-\alpha\right)}{{_2F_1}\left(1/2, 1/2; 1; \alpha\right)}
\end{eqnarray*}
Ramanujan recorded various degrees of modular equations in his note books. For instance, the modular equation of degree 3 (see \cite[Entry 5(ii), (ix), Ch.19, p.230]{Berndt-notebook-3}) as following
\begin{eqnarray}\label{lemma 2}
(\alpha\beta)^{1/4}+((1-\alpha)(1-\beta))^{1/4}&=& 1, \\
(\alpha(1-\beta))^{1/2} + (\beta(1-\alpha))^{1/2} &=& 2(\alpha\beta(1-\alpha)(1-\beta))^{1/8}. \label{lemma 2.e}
\end{eqnarray}
Also, Ramanujan defined mixed modular equation or modular equation of composite degrees, along with four distinct moduli.
Ramanujan recorded 23 $P$-$Q$ modular equations in terms of their theta function in his notebooks \cite{sr}. All those proved by Berndt et al. by employing the theory of theta functions and modular forms.\\
If, as usually quoted in the theory of elliptic functions, $k = k(q)$ denotes the modulus, then, the singular moduli $k_n$ is defined by $k_n = (e^{-\pi\sqrt{n}})$, where $n$ is a positive integer. In terms of Ramanujan, set $\alpha = k^2$ and $\alpha_n = k^2_n,$ he hypothesized the values of over 30 singular moduli in his notebooks. Also, he asserted the value of $k^2_{210}$ (which he wrote in his second letter to Hardy \cite[p. xxix]{13}), which was earlier proved by Wastson \cite{17} by using a remarkable formula. The formula found can also be in Ramanujan's first notebook \cite[Vol. 1, p. 320]{sr}. The same formula can also be used to evaluate various values of $\alpha_n$ for even values of $n$. On page 82 of his first notebook, Ramanujan stated three additional theorems for calculating $\alpha_n$ for even values of $n$. Particularly, he offered formulae for $\alpha_{4p}$, $\alpha_{8p},$ and $\alpha_{16p}$. Moreover, he recorded several values of $\alpha_n$ for odd values of $n$ in his first and second notebook. All these results were proved by Berndt et al. by employing  Ramanujan's class invariants $G_n$ and $g_n$. The Ramanujan's class invariants \cite[p.183, (1.3)]{Berndt-notebook-5} defined by for $q=e^{-\pi\sqrt{n}}$,
\begin{align}\label{evq1}
G_n = 2^{-1/4}q^{-1/24} \chi(q) = \frac{f(q)}{2^{1/4}q^{1/24}f(-q^2)}
\quad ; \quad
g_n = 2^{-1/4}q^{-1/24} \chi(-q) = \frac{f(-q)}{2^{1/4}q^{1/24}f(-q^2)},
\end{align}
where, $n$ is a positive rational number and $\chi(q)=(-q;q^2)_\infty$. Ramanujan evaluated a total of 116 class invariants \cite[p.189-204]{Berndt-notebook-5}. These class invariants were proved by various authors using techniques such as modular equations, Kronecker limit formula, and empirical process (established by Watson) \cite[Chapter 34]{Berndt-notebook-5}.

By Entry 12(v), (vi)\cite[p. 124]{Berndt-notebook-3}, and our conclusion \eqref{evq1} is that
\begin{align}\label{eq1}
G_n = (4\alpha_n(1-\alpha_n))^{-1/24}
\quad ; \quad
g_n = (4\alpha_n(1-\alpha_n)^{-2})^{-1/24} .
\end{align}
It follows the concept that if we know the explicit values of $G_n$ and $g_n,$ then the corresponding values of $\alpha_n$ can be obtained by solving a quadratic equations. However, the expression one obtain are generally unattractive, and so, better algorithm can be sought for evaluations of $\alpha_n$.

Let, for $|q| < 1$,
\begin{eqnarray}\label{ciontu}
N(q) : &=& 1+\frac{q}{1}_+\frac{q+q^2}{1}_+\frac{q^3}{1}_+\frac{q^2+q^4}{1}_+\cdots\cdots
\end{eqnarray}
In his notebooks \cite[p. 290]{sr}, Ramanujan recorded that
\begin{eqnarray}\label{ciontu2}
N(q)&=& \frac{(-q ; q^2)_{\infty}}{(-q^2 ; q^2)_{\infty}}.
\end{eqnarray}
This formula was at first proved by Selberg \cite{sel} and the alternative proof was given by Ramanathan \cite{kgr4}.  From \eqref{ciontu} and \eqref{ciontu2}, we obtain that
\begin{eqnarray} \nonumber
 S_1(q): &=& \frac{q^{1/8}}{N(q)}
 =\frac{q^{1/8}}{1}_+\frac{q}{1}_+\frac{q+q^2}{1}_+\frac{q^3}{1}_+\frac{q^2+q^4}{1}_+\cdots\cdots, \\
 &=&\frac{q^{1/8}(-q^2 ; q^2)_{\infty}}{(-q ; q^2)_{\infty}}\label{eq0.1},
\end{eqnarray}
which is called the Ramanujan-Selberg continued fraction.  Closely related to $ S_1(q)$ is the continued fraction $ S_2(q)$ (see \cite[(1.8), (1.9)]{zhan}), which can be defined by
\begin{eqnarray} \label{eqksn0.1}
 S_2(q): =\frac{q^{1/8}}{1}_+\frac{-q}{1}_+\frac{-q+q^2}{1}_+\frac{-q^3}{1}_+\frac{q^2+q^4}{1}_+\cdots\cdots  = \frac{q^{1/8}(-q^2 ; q^2)_{\infty}}{(q ; q^2)_{\infty}}, \ \   |q|<1
\end{eqnarray}
Zhang \cite{zhan} recorded a general formula to find the explicit evaluations of \eqref{eq0.1} by using Ramanujan's  singular moduli as follows:
\begin{eqnarray} \label{2.4}
 S_1\left(e^{-\pi\sqrt{n}}\right) &=& \frac{\alpha_n^{1/8}}{\sqrt{2}}.
\end{eqnarray}
Moveover, he established a general formula to calculate $ S_2(q)$ in terms of singular moduli.

The present paper is organized as follows. In Section 2, we state that a few lemmas which are essentials to prove our main results. We establish several new $P-Q$ mixed modular equations involving theta-functions, which are presented in Section 3. As application of those modular equations, we derive some new general formulae involving Weber-Ramanujan's class invariants for explicit evaluations of $\alpha_{9n}$,
$\alpha_{n/9}$, $S_1(q),$ and $ S_2(q),$ which are discussed in Section 4. Finally, calculate some explicit values of singular moduli and Ramanujan- Selberg continued fraction in Section 5.
\section{Preliminaries}
We list a few identities which are useful in establishing our main results.
\begin{lemma}\cite[Entry 12 (i), (ii), (iv) Ch.17, p.124]{Berndt-notebook-3} We have
\begin{eqnarray}
f(q)&=&\sqrt{z} 2^{-1/6}\left(\alpha(1-\alpha)/q\right)^{1/24} \label{feq1},\\
f(-q)&=&\sqrt{z} 2^{-1/6}\left(\alpha(1-\alpha)^4/q\right)^{1/24}\label{feq2},\\
f(-q^4)&=&\sqrt{z}2^{-2/3}\left(\alpha^4(1-\alpha)/q^4\right)^{1/24}\label{feq3}.
\end{eqnarray}
\end{lemma}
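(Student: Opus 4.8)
The plan is to reduce all three evaluations to a small set of classical ``inversion'' values and two exact infinite-product identities. First I would record the base evaluations coming from Jacobi's representation of the moduli. Writing $z:=\varphi^2(q)$, which by \eqref{112235eq0} equals ${}_2F_1(1/2,1/2;1;\alpha)$, one has $\varphi(q)=\sqrt z$ immediately, while expressing $\sqrt\alpha=k=\theta_2^2/\theta_3^2$ and $\sqrt{1-\alpha}=k'=\theta_4^2/\theta_3^2$ in Jacobi's notation, with $\theta_3(q)=\varphi(q)$, $\theta_4(q)=\varphi(-q)$ and $\theta_2(q)=2q^{1/4}\psi(q^2)$, yields the further base values
$$\varphi(-q)=\sqrt z\,(1-\alpha)^{1/4},\qquad \psi(q^2)=\tfrac12\sqrt z\,(\alpha/q)^{1/4}.$$
From these, the two auxiliary quantities I shall need follow by the standard duplication identities $\psi^2(q)=\varphi(q)\psi(q^2)$ and $\varphi^2(-q^2)=\varphi(q)\varphi(-q)$, giving $\psi(q)=\sqrt{z/2}\,(\alpha/q)^{1/8}$ and $\varphi^2(-q^2)=z(1-\alpha)^{1/4}$.

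Next I would establish, purely from the infinite-product forms in \eqref{classicaltheta} and the products for $\varphi,\psi$ displayed above (together with $\chi(q)=f(q)/f(-q^2)$, which is built into \eqref{evq1}), the exact identities
$$\varphi(q)=\frac{f^2(q)}{f(-q^2)},\qquad \varphi(-q)=\frac{f^2(-q)}{f(-q^2)},\qquad \psi(q)=\frac{f^2(-q^2)}{f(-q)}.$$
Multiplying the second and third of these produces the key factorization $f^3(-q)=\varphi^2(-q)\,\psi(q)$, while dividing the first two gives the quotient relation $f(q)/f(-q)=\bigl(\varphi(q)/\varphi(-q)\bigr)^{1/2}=(1-\alpha)^{-1/8}$.

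I would then obtain \eqref{feq2} by substituting $\varphi^2(-q)=z(1-\alpha)^{1/2}$ and $\psi(q)=\sqrt{z/2}\,(\alpha/q)^{1/8}$ into the factorization and taking a cube root, which collects the powers into $\sqrt z\,2^{-1/6}\{\alpha(1-\alpha)^4/q\}^{1/24}$. Formula \eqref{feq1} then drops out of \eqref{feq2} via $f(q)=f(-q)(1-\alpha)^{-1/8}$, the extra factor $(1-\alpha)^{-1/8}$ merely shifting the exponent of $1-\alpha$ from $4/24$ down to $1/24$ while leaving the $\alpha$- and $q$-powers untouched.

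For \eqref{feq3} I would replace $q$ by $q^2$ in the product identities, giving $\psi(q^2)=f^2(-q^4)/f(-q^2)$ and $f^3(-q^2)=\varphi^2(-q^2)\psi(q^2)$, and then feed in the duplication consequences $\varphi^2(-q^2)=z(1-\alpha)^{1/4}$ and $\psi(q^2)=\tfrac12\sqrt z\,(\alpha/q)^{1/4}$. Solving first for $f(-q^2)=\sqrt z\,2^{-1/3}\{\alpha(1-\alpha)/q\}^{1/12}$ and then for $f(-q^4)=\{\psi(q^2)f(-q^2)\}^{1/2}$ yields \eqref{feq3} after collecting the fractional powers of $q$, $\alpha$ and $1-\alpha$. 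The genuine analytic content sits entirely in the three inversion values of the first step, which are classical; within this lemma the only real work—and the step where sign- and exponent-bookkeeping is most error-prone—is the $f(-q^4)$ case, since it is reached indirectly through the intermediate quantity $f(-q^2)$ and the duplicated theta-value $\psi(q^2)$, so each passage to $q^2$ must be tracked with care.
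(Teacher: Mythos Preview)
Your derivation is correct. Note, however, that the paper does not itself prove this lemma: it is quoted verbatim as a preliminary from Berndt \cite[Entry~12, Ch.~17]{Berndt-notebook-3}, with no accompanying argument, so there is no ``paper's proof'' to compare against.

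What you have written is essentially the classical proof one finds in Berndt: start from the inversion values $\varphi(q)=\sqrt z$, $\varphi(-q)=\sqrt z\,(1-\alpha)^{1/4}$, $\psi(q)=\sqrt{z/2}\,(\alpha/q)^{1/8}$ (Entries 10--11 of the same chapter), combine them with the product relations $\varphi(-q)=f^2(-q)/f(-q^2)$ and $\psi(q)=f^2(-q^2)/f(-q)$ (Entry~24 of Ch.~16), and read off the $f$-values. Your key identity $f^3(-q)=\varphi^2(-q)\psi(q)$ and the quotient $f(q)/f(-q)=(1-\alpha)^{-1/8}$ are exactly the combinations Berndt uses, and your two-step route to $f(-q^4)$ via $f(-q^2)$ and $\psi(q^2)=f^2(-q^4)/f(-q^2)$ checks out with the stated constants. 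So your proposal is sound and matches the standard source the paper cites.
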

\begin{lemma}\cite[ p.39, Entry 24 (iii),(iv)]{Berndt-notebook-3} We have
\begin{eqnarray}\label{entry24}
\frac{f(q)}{f(-q^2)}&=& \frac{f(-q^2)f(-q^2)}{f(-q)f(-q^4)}.
\end{eqnarray}
\end{lemma}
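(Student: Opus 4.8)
The plan is to reduce \eqref{entry24} to an elementary identity between infinite products, since every theta-function occurring in it has a product representation supplied in the Introduction. First I would record, from \eqref{classicaltheta} together with Jacobi's factorization formula, the products
\begin{align*}
f(-q) &= (q;q)_\infty, & f(-q^2) &= (q^2;q^2)_\infty, & f(-q^4) &= (q^4;q^4)_\infty,
\end{align*}
together with the product form of the single positive-argument theta-function, namely $f(q) = (-q;q^2)_\infty (q^2;q^2)_\infty = \chi(q) f(-q^2)$, where $\chi(q)=(-q;q^2)_\infty$ as in \eqref{evq1}.

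With these substitutions the left side of \eqref{entry24} collapses immediately to
\[
\frac{f(q)}{f(-q^2)} = \frac{(-q;q^2)_\infty (q^2;q^2)_\infty}{(q^2;q^2)_\infty} = (-q;q^2)_\infty,
\]
while the right side becomes
\[
\frac{f(-q^2)^2}{f(-q)\,f(-q^4)} = \frac{(q^2;q^2)_\infty^2}{(q;q)_\infty\,(q^4;q^4)_\infty}.
\]
It therefore suffices to prove the single product identity $(q^2;q^2)_\infty^2 = (-q;q^2)_\infty (q;q)_\infty (q^4;q^4)_\infty$.

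To establish this I would split each base product according to the parity of its exponent. Writing $(q;q)_\infty = (q;q^2)_\infty (q^2;q^2)_\infty$ and $(q^2;q^2)_\infty = (q^2;q^4)_\infty (q^4;q^4)_\infty$, the right-hand quotient above simplifies to
\[
\frac{(q^2;q^2)_\infty}{(q;q^2)_\infty (q^4;q^4)_\infty} = \frac{(q^2;q^4)_\infty}{(q;q^2)_\infty}.
\]
The argument is then finished by the elementary Euler-type identity
\[
(q;q^2)_\infty (-q;q^2)_\infty = \prod_{m=0}^\infty (1-q^{2m+1})(1+q^{2m+1}) = \prod_{m=0}^\infty (1 - q^{4m+2}) = (q^2;q^4)_\infty,
\]
which rearranges to $(q^2;q^4)_\infty / (q;q^2)_\infty = (-q;q^2)_\infty$, exactly matching the reduced left side.

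The computation carries no genuine obstacle; the only points requiring care are fixing the correct product representation of the positive-argument theta-function $f(q)$ and splitting the $q$-products by parity consistently. An alternative route through the uniformizing variables $z$ and $\alpha$ of Lemma 2.1 is conceivable, but it would require a $z$-$\alpha$ expression for $f(-q^2)$ beyond the three formulas \eqref{feq1}--\eqref{feq3} provided there, so the product manipulation above is the most economical.
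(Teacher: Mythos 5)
Your proof is correct. The paper itself offers no argument for this lemma---it is simply cited from Berndt's Entry 24 (iii), (iv)---and your product manipulation is precisely the standard verification of that entry: reduce everything to $q$-products, split by parity, and invoke $(q;q^2)_\infty(-q;q^2)_\infty=(q^2;q^4)_\infty$. The one genuinely delicate point, which you handle correctly, is the meaning of $f(q)$: it is $(-q;-q)_\infty=(-q;q^2)_\infty(q^2;q^2)_\infty$ (obtained by $q\mapsto -q$ in $f(-q)=(q;q)_\infty$, consistent with $\chi(q)=f(q)/f(-q^2)$ implicit in \eqref{evq1}), and not the superficially plausible $f(q,q^2)=(-q;q^3)_\infty(-q^2;q^3)_\infty(q^3;q^3)_\infty$, which would break the computation.
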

\begin{lemma}\cite[Theorem 3.1(3.2)]{naikamahadeva} \label{lemma 6}
Let
\begin{align*}
K = \left(256\alpha\beta\gamma\delta(1-\alpha)(1-\beta)(1-\gamma)(1-\delta)\right)^{1/24}
\quad ; \quad
R = \left(\frac{\gamma\delta(1-\gamma)(1-\delta)}{\alpha\beta(1-\alpha)(1-\beta)}\right)^{1/24}.
\end{align*}
If $\alpha, \beta, \gamma, \delta$ is of degree $1,3,3,9,$  then,
\begin{eqnarray}\label{lemma 6.2}
8\left(K^3+\frac{1}{K^3}\right)\left(R^3+\frac{1}{R^3}+1\right) &=& \left(R^9+\frac{1}{R^9}\right)+10\left(R^6+\frac{1}{R^6}\right)
+19\left(R^3+\frac{1}{R^3}\right)+36.
\end{eqnarray}
\end{lemma}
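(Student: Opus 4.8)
The plan is to turn \eqref{lemma 6.2} into an algebraic relation between the two degree-$3$ building blocks attached to the horizontal edges of the configuration, and then to produce that relation by elimination. Since $256=2^8$, cubing the definitions gives $K^3=2\bigl(\alpha\beta\gamma\delta(1-\alpha)(1-\beta)(1-\gamma)(1-\delta)\bigr)^{1/8}$ and $R^3=\bigl(\gamma\delta(1-\gamma)(1-\delta)\bigr)^{1/8}/\bigl(\alpha\beta(1-\alpha)(1-\beta)\bigr)^{1/8}$. Writing $A=\bigl(\alpha\beta(1-\alpha)(1-\beta)\bigr)^{1/8}$ and $D=\bigl(\gamma\delta(1-\gamma)(1-\delta)\bigr)^{1/8}$, these collapse to $K^3=2AD$ and $R^3=D/A$, so that $K^3+K^{-3}$ and each $R^{3k}+R^{-3k}$ appearing in \eqref{lemma 6.2} are expressible through $A$ and $D$; after clearing denominators the target becomes a single polynomial identity in $A$ and $D$.

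These blocks are exactly what the degree-$3$ modular equation \eqref{lemma 2.e} controls. Because $\beta,\gamma,\delta$ have degrees $3,3,9$ over $\alpha$, the four moduli sit at the corners of a square in which every edge is a degree-$3$ relation: $(\alpha,\beta)$, $(\gamma,\delta)$, $(\alpha,\gamma)$ and $(\beta,\delta)$. For the horizontal edges, \eqref{lemma 2.e} reads $(\alpha(1-\beta))^{1/2}+(\beta(1-\alpha))^{1/2}=2A$ and $(\gamma(1-\delta))^{1/2}+(\delta(1-\gamma))^{1/2}=2D$, while \eqref{lemma 2} gives $(\alpha\beta)^{1/4}+((1-\alpha)(1-\beta))^{1/4}=1$ and its $(\gamma,\delta)$ analogue. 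My plan is to introduce the vertical-edge blocks $B=\bigl(\alpha\gamma(1-\alpha)(1-\gamma)\bigr)^{1/8}$ and $C=\bigl(\beta\delta(1-\beta)(1-\delta)\bigr)^{1/8}$, to write down the corresponding instances of \eqref{lemma 2}--\eqref{lemma 2.e} for $(\alpha,\gamma)$ and $(\beta,\delta)$, and then to eliminate $\alpha,\beta,\gamma,\delta,B,C$. The vertical relations are precisely what tie $A$ to $D$, and the resulting relation between $A$ and $D$ should match the $A,D$-form of \eqref{lemma 6.2} obtained in the first step.

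A computationally cleaner finish runs through the theory of modular forms. By \eqref{feq1} each factor $(\mu(1-\mu)/q)^{1/24}$ equals $2^{1/6}f(q)/\sqrt{z}$, with $\sqrt z=\varphi(q)$ the associated multiplier; on forming the product and quotient defining $K$ and $R$ at the relevant powers of a single nome $q$, these multiplier factors recombine with the $\varphi$-content of the $f$'s and, after simplification by \eqref{entry24}, present $K$ and $R$ as eta-quotients, hence as modular functions on a congruence group such as $\Gamma_0(36)$. Both sides of \eqref{lemma 6.2} are then modular functions holomorphic away from the cusps, so it suffices to expand $K$ and $R$ as $q$-series and to check the identity up to the order fixed by the total pole order; agreement of finitely many coefficients proves it everywhere.

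I expect the principal obstacle to lie in the elimination of the first route: the moduli enter through eighth, fourth and square roots, so the correct branch of each radical must be carried consistently, and removing $\alpha,\beta,\gamma,\delta,B,C$ calls for a sizeable resultant computation that can easily acquire spurious factors. In the modular-forms route the difficulty is instead concentrated at the outset---verifying that the multiplier factors $\sqrt z$ genuinely cancel so that $K$ and $R$ have weight zero---and in bounding the pole order tightly enough that the finite coefficient comparison is a proof rather than strong numerical evidence.
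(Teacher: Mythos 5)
The paper does not prove this lemma at all: it is imported verbatim, with citation, from Theorem 3.1 of Mahadeva Naika--Suman--Chandankumar, so there is no internal proof to compare yours against. That said, your first route is essentially the method used both in that source and in this paper's own Theorems 3.1--3.4: transcribe everything into the blocks $A=(\alpha\beta(1-\alpha)(1-\beta))^{1/8}$ and $D=(\gamma\delta(1-\gamma)(1-\delta))^{1/8}$ (your reduction $K^3=2AD$, $R^3=D/A$ is correct, and clearing denominators does turn \eqref{lemma 6.2} into a single inhomogeneous polynomial identity in $A,D$), then use the degree-$3$ relations \eqref{lemma 2} and \eqref{lemma 2.e} on each edge and eliminate. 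One structural remark: since $\beta$ and $\gamma$ are both of degree $3$ over $\alpha$ they coincide here, so your square degenerates to the chain $\alpha\to\beta\to\delta$ with two degree-$3$ edges sharing the middle modulus; you then need only eliminate $\alpha,\beta,\delta$ from the four equations \eqref{lemma 2}, \eqref{lemma 2.e} on the two edges, which is a smaller computation than the one you set up and removes the vertical blocks $B,C$ entirely.

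The genuine gap is that neither route is executed, and route (1) as written is missing its last step. The elimination will not return \eqref{lemma 6.2} alone but a product of factors, one per branch of the radicals, exactly as happens in the displayed factorizations in Theorems 3.1--3.4 of this paper; you flag the spurious factors but do not say how you would isolate the correct one. The standard device, used throughout this paper, is to let $q\to 0$ (equivalently $\alpha,\beta,\gamma,\delta\to 0$ with known leading behaviour) and keep the unique factor that vanishes in that limit; without this, or some equivalent branch-selection argument, the computation proves only that \emph{some} factor vanishes. For route (2), your weight count is right ($K$ and $R$ are quotients of weight-$2$ products of $f$'s by $\varphi(q)\varphi^2(q^3)\varphi(q^9)$, hence weight-$0$ eta-quotients on $\Gamma_0(36)$), but the proof is only as good as the explicit bound on the total order of the poles at the cusps, which you acknowledge but do not supply. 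As it stands the proposal is a sound plan consistent with the cited source's method, not yet a proof.
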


\section{New Mixed Modular Equations}
In this section, we discuss about the establishment of a few novel mixed modular equations using theory of theta functions.
\begin{theorem}\label{the1}
If  $P=\displaystyle \frac{f(-q)f(-q^{3})}{q^{1/2}f(-q^{4})f(-q^{12})}$  and  $ Q=\displaystyle \frac{f(-q^{3})f(-q^{9})}{q^{3/2}f(-q^{12})f(-q^{36})}$, then,
\begin{eqnarray*}
\left(PQ+\frac{16}{PQ}\right)\left(\frac{P}{Q}+\frac{Q}{P}+1\right) &=&\left(\frac{P}{Q}+\frac{Q}{P}\right)^3
-2\left(\frac{P}{Q}+\frac{Q}{P}\right)^2-8\left(\frac{P}{Q}+\frac{Q}{P}\right)-8.
\end{eqnarray*}
\end{theorem}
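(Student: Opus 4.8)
The plan is to reduce the stated identity to the degree-$(1,3,3,9)$ modular equation \eqref{lemma 6.2} of Lemma \ref{lemma 6} by means of an explicit rational change of variables. To begin, I introduce the moduli $\alpha,\beta,\delta$ of degrees $1,3,9$, attached to the nomes $q,q^{3},q^{9}$ respectively. Dividing \eqref{feq2} by \eqref{feq3} and cancelling $\sqrt z$ gives
\[
\frac{f(-q)}{q^{1/8}f(-q^{4})}=\sqrt{2}\left(\frac{1-\alpha}{\alpha}\right)^{1/8},
\]
and the substitution $q\mapsto q^{3}$ yields the same relation with $\beta$ in place of $\alpha$. Grouping the theta quotients of $P$ into the pairs $(q,q^{4})$ and $(q^{3},q^{12})$, and those of $Q$ into $(q^{3},q^{12})$ and $(q^{9},q^{36})$, I obtain the closed forms
\[
P=2\left(\frac{(1-\alpha)(1-\beta)}{\alpha\beta}\right)^{1/8},\qquad
Q=2\left(\frac{(1-\beta)(1-\delta)}{\beta\delta}\right)^{1/8}.
\]

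Because $\beta$ is of degree $3$ over $\alpha$ and $\delta$ is of degree $3$ over $\beta$, the degree-$3$ modular equation \eqref{lemma 2} applies to each consecutive pair. Setting $s=(\alpha\beta)^{1/4}$ and $t=((1-\alpha)(1-\beta))^{1/4}$, the relation $P^{2}/4=t/s$ read off from the formula for $P$, combined with $s+t=1$, solves to $s=4/(4+P^{2})$ and $t=P^{2}/(4+P^{2})$; the identical computation for $(\beta,\delta)$ gives $(\beta\delta)^{1/4}=4/(4+Q^{2})$ and $((1-\beta)(1-\delta))^{1/4}=Q^{2}/(4+Q^{2})$. Since the nome $q^{3}$ supplies the unique modulus of degree $3$, the two degree-$3$ moduli of Lemma \ref{lemma 6} coincide ($\gamma=\beta$); substituting the four quantities just found into the definitions of $K$ and $R$ and simplifying then yields the change of variables
\[
K^{3}=\frac{8PQ}{(4+P^{2})(4+Q^{2})},\qquad
R^{3}=\frac{Q(4+P^{2})}{P(4+Q^{2})}.
\]

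It now remains to substitute these into \eqref{lemma 6.2}. Writing $y=R^{3}+R^{-3}$, the right-hand side of \eqref{lemma 6.2} collapses to $y^{3}+10y^{2}+16y+16$, and the left-hand side is $8(K^{3}+K^{-3})(y+1)$. Expressing $K^{3}+K^{-3}$ and $y$ through $P,Q$ and clearing the common denominators $(4+P^{2})$ and $(4+Q^{2})$ should, after cancellation, leave precisely the asserted relation between $PQ+16/(PQ)$ and $P/Q+Q/P$. A convenient bookkeeping aid is that $K^{3}R^{3}=8Q^{2}/(4+Q^{2})^{2}$ and $K^{3}/R^{3}=8P^{2}/(4+P^{2})^{2}$ depend on $Q$ and on $P$ separately, which keeps the symmetric combinations $PQ$ and $P/Q+Q/P$ in view throughout the reduction.

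The final polynomial reduction is where I expect the real work to lie. Lemma \ref{lemma 6} is a genuinely transcendental ingredient—it carries the degree-$9$ information that \eqref{lemma 2} by itself cannot provide—so the entire content of the theorem is encoded in turning its cubic in $R^{3}+R^{-3}$ into the compact cubic in $P/Q+Q/P$. Matching every power of $(4+P^{2})$ and $(4+Q^{2})$ across the identity is lengthy though entirely mechanical; by contrast the steps upstream (the parametrizations \eqref{feq2}, \eqref{feq3} and the degree-$3$ equation \eqref{lemma 2}) are routine once the grouping of nomes is fixed.
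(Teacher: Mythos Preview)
Your approach is exactly the paper's: parametrize $P,Q$ via \eqref{feq2}--\eqref{feq3}, feed the degree-$3$ equation \eqref{lemma 2} into the definitions of $K,R$, and substitute into \eqref{lemma 6.2}. The change of variables you obtain for $K^{3}$ and $R^{3}$ matches the paper's \eqref{e4qn1} verbatim.

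The gap is in your last paragraph. After clearing denominators, the equation you obtain from \eqref{lemma 6.2} is \emph{not} the asserted relation; it is a reducible polynomial $A(P,Q)\,B(P,Q)=0$ with two irreducible sextic factors, only one of which (the factor $B$) rearranges to the theorem's identity. The other factor $A$ is a spurious algebraic companion that does not vanish for the theta-quotients $P,Q$. The paper disposes of $A$ by checking the limit $q\to 0$: one sees from the $q$-expansions that $P,Q\to\infty$ with $P/Q\to 1$ and $PQ\sim q^{-2}$, which is incompatible with $A=0$ but compatible with $B=0$. Without this selection argument your ``mechanical'' reduction cannot terminate in the stated cubic in $P/Q+Q/P$; you would be left with a degree-$12$ relation. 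So the missing idea is precisely the factorization of the cleared equation and the $q\to 0$ test that singles out the correct factor.
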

\begin{proof}
Transcribing $P$ and $Q$ using \eqref{feq2} and \eqref{feq3}, then simplifying, we arrive at
\begin{align}\label{eqn1}
\frac{P}{2}=\left(\frac{(1-\alpha)(1-\beta)}{\alpha\beta}\right)^{1/8}
\quad ; \quad
\frac{Q}{2}=\left(\frac{(1-\gamma)(1-\delta)}{\gamma\delta}\right)^{1/8}.
\end{align}
Employing \eqref{eqn1} in $\eqref{lemma 2}$, we have
\begin{align}\label{e4qn1}
K=\left(\frac{8PQ}{(P^2+4)(Q^2+4)}\right)^{1/3}
\quad ; \quad
R=\left(\frac{P^2Q+4Q}{PQ^2+4P}\right)^{1/3},
\end{align}
where, $K,$ and $R$ are defined as in Lemma \ref{lemma 6}. Now, on applying  \eqref{e4qn1} in \eqref{lemma 6.2}, we obtain that
\begin{eqnarray}\label{5eqnnn}
A(P,Q)B(P,Q) &=& 0,
\end{eqnarray}
where,
\begin{eqnarray*}
A(P,Q) &=&P^6Q^6-8P^5Q^5-64P^3Q^5-80P^4Q^4-256P^2Q^4-64P^5Q^3-768P^3Q^3
\\ && -1024PQ^3-256P^4Q^2-1280P^2Q^2-1024P^3Q-2048PQ+4096,\\
B(P,Q) &=& {Q}^{6}-{P}^{3}\,{Q}^{5}-2\,P\,{Q}^{5}-{P}^{4}\,{Q}^{4}-5\,{P}^{2}\,{Q}^{4}-{P}^{5}\,{Q}^{3}-12\,{P}^{3}\,{Q}^{3}-16\,P\,{Q}^{3}
\\ && -5\,{P}^{4}\,{Q}^{2}-16\,{P}^{2}\,{Q}^{2}-2\,{P}^{5}\,Q-16\,{P}^{3}\,Q+{P}^{6}.
\end{eqnarray*}
We observe that the first factor of \eqref{5eqnnn} does not vanish for $q\rightarrow 0$. Nevertheless, the second factor vanish for that specific value. Dividing them by $P^3Q^3$ and rearranging the terms. Hence we complete the proof.
\end{proof}
\begin{theorem}\label{th5ue1}
If  $P=\displaystyle \frac{q^{1/4}f(-q)f(-q^{12})}{f(-q^{3})f(-q^{4})}$  and  $ Q=\displaystyle \frac{q^{3/4}f(-q^{3})f(-q^{36})}{f(-q^{9})f(-q^{12})}$, then
\begin{eqnarray}\label{ehma}
\left(\frac{1}{PQ}+PQ\right)^2 &=&\left(\frac{P}{Q}+\frac{Q}{P}\right)^2\left(\frac{1}{PQ}+PQ + 1\right)+4.
\end{eqnarray}
\end{theorem}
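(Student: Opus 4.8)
The plan is to follow the template of the proof of Theorem~\ref{the1}. Attach to the nomes $q,q^{3},q^{3},q^{9}$ the moduli $\alpha,\beta,\gamma,\delta$ of degrees $1,3,3,9$ (so $\beta,\gamma$ are both the modulus at nome $q^{3}$). Transcribing $P$ and $Q$ with \eqref{feq2} and \eqref{feq3} — pairing $f(-q),f(-q^{4})$ through $\alpha$, pairing $f(-q^{3}),f(-q^{12})$ through $\beta$, and $f(-q^{9}),f(-q^{36})$ through $\delta$ — all powers of $2$ and all powers of $q$ cancel, leaving the \emph{cross-ratios}
\begin{align*}
P=\left(\frac{\beta(1-\alpha)}{\alpha(1-\beta)}\right)^{1/8}\quad;\quad Q=\left(\frac{\delta(1-\gamma)}{\gamma(1-\delta)}\right)^{1/8},
\end{align*}
in contrast with the product form $P/2=((1-\alpha)(1-\beta)/\alpha\beta)^{1/8}$ that appeared in Theorem~\ref{the1}.

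Because these are cross-ratios, the correct auxiliary identity is the \emph{second} degree-$3$ modular equation \eqref{lemma 2.e}, not \eqref{lemma 2}. Setting $a=\sqrt{\alpha(1-\beta)}$ and $b=\sqrt{\beta(1-\alpha)}$, one has $b=P^{4}a$ and $ab=(\alpha\beta(1-\alpha)(1-\beta))^{1/2}$, so \eqref{lemma 2.e} reads $a+b=2(\alpha\beta(1-\alpha)(1-\beta))^{1/8}$ and yields $\alpha\beta(1-\alpha)(1-\beta)=256P^{16}/(1+P^{4})^{8}$; the same computation on $(\gamma,\delta)$ gives $\gamma\delta(1-\gamma)(1-\delta)=256Q^{16}/(1+Q^{4})^{8}$. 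Since $K$ in Lemma~\ref{lemma 6} is the product and $R$ the ratio of these two pair-products, I expect
\begin{align*}
K^{3}=\frac{8P^{2}Q^{2}}{(1+P^{4})(1+Q^{4})}\quad;\quad R^{3}=\frac{Q^{2}(1+P^{4})}{P^{2}(1+Q^{4})}.
\end{align*}

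Next I would substitute these into \eqref{lemma 6.2}, clear the denominators, and factor the resulting polynomial in $P,Q$. As in Theorem~\ref{the1} this should split into several factors, only one of which vanishes as $q\to0$; discarding the spurious factors and rearranging the remaining one should return \eqref{ehma} exactly. Concretely, writing $X=PQ+1/(PQ)$ and $Y=P/Q+Q/P$, I expect the transcribed \eqref{lemma 6.2} to collapse to a relation in $X^{2},Y^{2}$ that factors into the four cubics $X^{2}-Y^{2}(1\pm X)-4$ and $Y^{2}-X^{2}(1\pm Y)-4$. Using $X=q^{-1}+1+O(q)$ and $Y=q^{-1/2}+O(q^{5/2})$, a short expansion shows that the constant terms cancel only in $X^{2}-Y^{2}(X+1)-4$ (the other three blow up like $q^{-2}$ or $q^{-5/2}$), and that surviving factor is precisely \eqref{ehma}.

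The main difficulty is twofold. First, \eqref{lemma 6.2} contains $R^{9}+R^{-9}$, so the substitution produces a lengthy (degree $12$ in $X,Y$) polynomial whose clearing and factoring must be carried out carefully. The subtler and more essential point is the selection of the correct factor: since $P\mapsto 1/P$ is exactly the swap $\alpha\leftrightarrow\beta$, the quantities $K$ and $R$ — and hence the whole transcribed \eqref{lemma 6.2} — are invariant under $P\mapsto 1/P$, whereas the target \eqref{ehma} is not (it maps to $Y^{2}=X^{2}(Y+1)+4$). Thus \eqref{lemma 6.2} cannot single out \eqref{ehma} by algebra alone; the three companion factors, in particular the conjugate $Y^{2}=X^{2}(Y+1)+4$, have to be eliminated, and this is exactly what the $q\to0$ asymptotics accomplish.
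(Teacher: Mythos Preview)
Your proposal is correct and follows essentially the same route as the paper's own proof: transcribe $P,Q$ via \eqref{feq2}--\eqref{feq3} to get the cross-ratios, feed \eqref{lemma 2.e} to express $\bigl(\alpha\beta(1-\alpha)(1-\beta)\bigr)^{1/8}=2P^{2}/(P^{4}+1)$ (and likewise for $Q$), substitute into \eqref{lemma 6.2}, factor, and pick out the unique factor vanishing as $q\to0$. Your presentation in the symmetric variables $X=PQ+1/(PQ)$, $Y=P/Q+Q/P$ and your observation that the $P\mapsto1/P$ symmetry forces the spurious companion $Y^{2}=X^{2}(Y+1)+4$ to appear are a nice gloss, but the underlying argument is the paper's.
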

\begin{proof}
Transcribing $P$ and $Q$ using \eqref{feq2}, and \eqref{feq3}, then by simplifying, we arrive at
\begin{align}\label{alpbeta}
P=\left(\frac{\beta(1-\alpha)}{\alpha(1-\beta)}\right)^{1/8}
\quad ; \quad
Q=\left(\frac{\delta(1-\gamma)}{\gamma(1-\delta)}\right)^{1/8}.
\end{align}
Employing \eqref{alpbeta} in \eqref{lemma 2.e}, we have
\begin{align}\label{alp2beta}
\left(\alpha\beta(1-\alpha)(1-\beta)\right)^{1/8} = \frac{2P^2}{P^4+1}
\quad ; \quad
\left(\gamma\delta(1-\gamma)(1-\delta)\right)^{1/8} = \frac{2Q^2}{Q^4+1}.
\end{align}
Now applying  \eqref{alp2beta} in \eqref{lemma 6.2}, we obtain the following result
\begin{eqnarray*}\label{5e9n}
(P^2Q^6-P^5Q^5+PQ^5+2P^4Q^4+Q^4+4P^3Q^3+P^6Q^2+2P^2Q^2+P^5Q-PQ+P^4)&& \\
\times(P^2Q^6+P^5Q^5-PQ^5+2P^4Q^4+Q^4-4P^3Q^3+P^6Q^2+2P^2Q^2-P^5Q+PQ+P^4)&& \\
\times(P^4Q^6-P^5Q^5+PQ^5+P^6Q^4+2P^2Q^4-4P^3Q^3+2P^4Q^2+Q^2+P^5Q-PQ+P^2)&& \\
\times(P^4Q^6+P^5Q^5-PQ^5+P^6Q^4+2P^2Q^4+4P^3Q^3+2P^4Q^2+Q^2-P^5Q+PQ+P^2)&=& 0.
\end{eqnarray*}
We observe that the first factors of the aforemention equation vanish for $q\rightarrow 0,$ whereas, the other factors does not vanish for that specific value. Thus, we obtain that
\begin{eqnarray*}\label{5e9n}
P^2Q^6-P^5Q^5+PQ^5+2P^4Q^4+Q^4+4P^3Q^3+P^6Q^2+2P^2Q^2+P^5Q-PQ+P^4&=& 0.
\end{eqnarray*}
Dividing the equation by $P^3Q^3$ and rearranging the terms, we arrive at the desired result.
\end{proof}

\begin{theorem}\label{t3o6ohe3}
If  $P=\displaystyle \frac{f(-q)}{q^{1/8}f(-q^{4})}$ and $ Q=\displaystyle \frac{f(-q^9)}{q^{9/8}f(-q^{36})},$ then,
\begin{eqnarray}\nonumber
\left(P^4Q^4+\frac{256}{P^4Q^4}\right)\left(\frac{P}{Q}+\frac{Q}{P}+1\right) &=&\left(\frac{P}{Q}+\frac{Q}{P}\right)^6-8\left(\frac{P}{Q}+\frac{Q}{P}\right)^5+4\left(\frac{P}{Q}+\frac{Q}{P}\right)^4
+64\left(\frac{P}{Q}+\frac{Q}{P}\right)^3\\ &&-16\left(\frac{P}{Q}+\frac{Q}{P}\right)^2 -160\left(\frac{P}{Q}+\frac{Q}{P}\right)-96.\label{t3oohe3}
\end{eqnarray}
\end{theorem}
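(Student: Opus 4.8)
The plan is to follow the template of Theorems~\ref{the1} and~\ref{th5ue1}: transcribe $P,Q$ into singular moduli through \eqref{feq2} and \eqref{feq3}, feed the outcome into the degree $1,3,3,9$ relation \eqref{lemma 6.2}, and discard the spurious algebraic factor by testing the behaviour as $q\to0$. Carrying out the transcription exactly as before gives
\begin{align*}
\frac{P}{\sqrt{2}}=\left(\frac{1-\alpha}{\alpha}\right)^{1/8}
\quad;\quad
\frac{Q}{\sqrt{2}}=\left(\frac{1-\delta}{\delta}\right)^{1/8},
\end{align*}
so that $\alpha=16/(16+P^{8})$ and $\delta=16/(16+Q^{8})$, where $\alpha,\delta$ have degrees $1,9$ and the two middle moduli coincide, $\beta=\gamma$, this being the degree-$3$ modulus attached to $q^{3}$. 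Putting $\beta=\gamma$ into Lemma~\ref{lemma 6}, the factor $\beta(1-\beta)$ cancels from $R$, so $R^{24}=\delta(1-\delta)/(\alpha(1-\alpha))$ is already a function of $P,Q$; but $K^{24}=256\,\alpha(1-\alpha)\,\delta(1-\delta)\,(\beta(1-\beta))^{2}$ still carries the intermediate modulus.

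The heart of the matter is therefore to eliminate $\beta$, and the cleanest route I would take is to recycle the two equations just proved rather than to parametrise $\beta(1-\beta)$ directly. Writing $T_{x}=\sqrt{2}\,((1-x)/x)^{1/8}$, all three pairs of theta quotients are built from the same atoms $T_{\alpha},T_{\beta},T_{\delta}$: here $P=T_{\alpha}$ and $Q=T_{\delta}$, whereas the quantities of Theorem~\ref{the1} are $T_{\alpha}T_{\beta}$, $T_{\beta}T_{\delta}$ and those of Theorem~\ref{th5ue1} are $T_{\alpha}/T_{\beta}$, $T_{\beta}/T_{\delta}$. Hence the unknown enters only through $\tau:=T_{\beta}^{2}$, and with $p:=PQ=T_{\alpha}T_{\delta}$ and $x:=P/Q+Q/P=T_{\alpha}/T_{\delta}+T_{\delta}/T_{\alpha}$, Theorems~\ref{the1} and~\ref{th5ue1} read
\begin{align*}
\left(p\tau+\frac{16}{p\tau}\right)(x+1)=x^{3}-2x^{2}-8x-8
\quad;\quad
x^{2}=\left(\frac{p}{\tau}+\frac{\tau}{p}\right)^{2}(x+1)+4 .
\end{align*}

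It then remains to eliminate $\tau$. I would set $u:=p\tau$ and $v:=p/\tau$, so that $u$ satisfies $u^{2}-M_{0}(x)\,u+16=0$ with $M_{0}(x)=(x^{3}-2x^{2}-8x-8)/(x+1)$ from the first relation, while $v$ satisfies $v^{2}-z\,v+1=0$ with $z^{2}=(x^{2}-4)/(x+1)$ from the second. Since $p^{2}=uv$, the quantity $p^{4}+256/p^{4}=(PQ)^{4}+256/(PQ)^{4}$ is pinned down as an algebraic function of $x$, and the degree bookkeeping is consistent: eliminating one variable between a cubic and a quadratic relation produces precisely the degree-$6$ polynomial in $P/Q+Q/P$ on the right of \eqref{t3oohe3}. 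Clearing the two nested square roots (those hidden in $u$ and in $z$) forces a couple of squarings, which is exactly what will generate a product of several polynomial factors set to zero, as in the earlier proofs; I expect the factor vanishing as $q\to0$ to be the desired one, and selecting it, dividing by the appropriate power of $PQ$, and rearranging should yield \eqref{t3oohe3}.

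The main obstacle is this radical-clearing and branch-selection step: one must check that the correct factor reproduces \eqref{t3oohe3} rather than one of the extraneous factors introduced by squaring, while keeping track of the $q$-expansions used to discard the others. A fully equivalent but heavier alternative is the direct route through Lemma~\ref{lemma 6}, in which one instead applies \eqref{lemma 2} and \eqref{lemma 2.e} twice, to the pairs $(\alpha,\beta)$ and $(\beta,\delta)$, so as to express $\beta(1-\beta)$ and hence $K$ in terms of $\alpha,\delta$; this meets the same elimination in a less transparent form, which is why I would favour combining Theorems~\ref{the1} and~\ref{th5ue1} directly.
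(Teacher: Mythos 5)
Your plan coincides with the paper's own proof: the paper likewise recycles Theorems \ref{the1} and \ref{th5ue1}, rewriting them in terms of $h=P/Q+Q/P$ (your $x$) together with the product $p\tau$ and quotient $p/\tau$ involving the intermediate degree-$3$ quantity, solving each quadratic for its radical, multiplying to isolate $P^4Q^4$, squaring to clear the radical, and then selecting the factor that vanishes as $q\to 0$. The only cosmetic difference is that the paper works with $(p/\tau)^2$ rather than $p/\tau$, so both quadratics share the single discriminant $h^4-4h^3-12h^2+16h+32$ and only one squaring is required, whereas your formulation with $v^2-zv+1=0$ introduces the extra radical $z$ and hence the ``couple of squarings'' you anticipate.
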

\begin{proof}
By Theorem \ref{the1} can be written in the following form,
\begin{eqnarray*}\label{12the13351}
\left(\frac{f(-q)f(-q^9)f^2(-q^3)}{qf(-q^4)f(-q^{36})f^2(-q^{12})}\right)+
16\left(\frac{qf(-q^4)f(-q^{36})f^2(-q^{12})}{f(-q)f(-q^9)f^2(-q^3)}\right) &=& \frac{h^3-2h^2-8h-8}{h+1},
\end{eqnarray*}
where, $\displaystyle h = P/Q+Q/P$. Solving the above equation, we arrive at
\begin{eqnarray}\label{12tthe13351}
\frac{f(-q)f(-q^9)f^2(-q^3)}{qf(-q^4)f(-q^{36})f^2(-q^{12})}&=& \frac{h^3-2h^2-8h-8+vh}{2(h+1)},
\end{eqnarray}
where $\displaystyle v = \pm \sqrt{h^4-4h^3-12h^2+16h+32}$. Similarly, by Theorem \ref{th5ue1}, we obtain that
\begin{eqnarray}\label{12themm51}
\left(\frac{f(-q)f(-q^9)f^2(-q^{12})}{q^{1/4}f(-q^4)f(-q^{36})f^2(-q^{3})}\right)^2 &=& \frac{h^2-2h-6+v}{2(h+1)},
\end{eqnarray}
Now multiplying \eqref{12tthe13351} and \eqref{12themm51}, then employing the value of $v^2$, we deduce that \\

$\displaystyle 4h^8-24h^7+v(4h^6-16h^5-44h^4+72h^3+256h^2+224h+64)
-44h^6+256h^5+464h^4+(-8P^4Q^4-512)h^3+(-24P^4Q^4-1728)h^2+(-24P^4Q^4-1408)h-8P^4Q^4-384 = 0$ \\

Isolating the terms containing $v$ on one side of the above equation and squaring both sides, we arrive at \\

$\displaystyle (h+1)^5(P^4Q^4h^6-8P^4Q^4h^5+4P^4Q^4h^4+64P^4Q^4h^3 -16P^4Q^4h^2-P^8Q^8h-160P^4Q^4h-256h -P^8Q^8 \\ -96P^4Q^4-256) = 0$

We observe that the second factors of above equation vanish for $q\rightarrow 0$ and the first factor does not vanish for that specific value. Dividing the aforemention equation by $P^4Q^4$ and rearranging the terms. Hence we complete the proof.
\end{proof}
\begin{theorem}\label{the3}
If  $P=\displaystyle \frac{f(-q)f(-q^{9})}{q^{5/4}f(-q^{4})f(-q^{36})}$ and $ Q=\displaystyle \frac{f(-q^{3})f(-q^{3})}{q^{3/4}f(-q^{12})f(-q^{12})},$ then
\begin{eqnarray}\label{t3he3}
\left(\frac{P}{Q}+\frac{Q}{P}\right)^3 &=& Q^2 + \frac{16}{Q^2}.
\end{eqnarray}
\end{theorem}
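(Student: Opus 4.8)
The plan is to mimic the proofs of Theorems \ref{the1} and \ref{th5ue1}: transcribe $P$ and $Q$ into moduli by \eqref{feq2} and \eqref{feq3}, reduce \eqref{t3he3} to an algebraic identity among the moduli, impose the degree-$3$ modular equations \eqref{lemma 2} and \eqref{lemma 2.e}, and at the end retain the polynomial factor selected by the behaviour as $q\to 0$.

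First I would transcribe. Pairing $f(-q)$ with $f(-q^{4})$ (modulus $\alpha$ of degree $1$), $f(-q^{9})$ with $f(-q^{36})$ (modulus $\delta$ of degree $9$), and $f(-q^{3})$ with $f(-q^{12})$ (modulus $\beta$ of degree $3$), a direct simplification with \eqref{feq2} and \eqref{feq3} should give
\[
P = 2\left(\frac{(1-\alpha)(1-\delta)}{\alpha\delta}\right)^{1/8},\qquad Q = 2\left(\frac{1-\beta}{\beta}\right)^{1/4}.
\]
Introducing $x=\left((1-\alpha)/\alpha\right)^{1/8}$, $y=\left((1-\beta)/\beta\right)^{1/8}$, $z=\left((1-\delta)/\delta\right)^{1/8}$, this reads $P=2xz$ and $Q=2y^{2}$, whence
\[
\frac{P}{Q}+\frac{Q}{P}=\frac{xz}{y^{2}}+\frac{y^{2}}{xz},\qquad Q^{2}+\frac{16}{Q^{2}}=4\left(y^{4}+\frac{1}{y^{4}}\right).
\]
Thus the right-hand side of \eqref{t3he3} involves $\beta$ only, and the whole assertion collapses to proving $\left(\pi/y^{2}+y^{2}/\pi\right)^{3}=4\left(y^{4}+y^{-4}\right)$ with $\pi:=xz$; so it suffices to pin down the product $xz$ as a function of $y$.

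Next I would feed in the modular equations. Because $\beta$ has degree $3$ over $\alpha$ and $\delta$ has degree $3$ over $\beta$, equation \eqref{lemma 2.e} applies to each of the pairs $(\alpha,\beta)$ and $(\beta,\delta)$; simplifying the surviving radical by \eqref{lemma 2} should collapse both to the clean form
\[
x^{4}+y^{4}=2xy\left(1+x^{2}y^{2}\right),\qquad z^{4}+y^{4}=2yz\left(1+y^{2}z^{2}\right).
\]
The decisive remark is that these are exactly the statements that $x$ and $z$ are two roots of one and the same quartic $T^{4}-2y^{3}T^{3}-2yT+y^{4}=0$ in $T$, with $y$ as a parameter. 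Setting $\sigma=x+z$ and $\pi=xz$, the difference and the sum of the two relations turn into a pair of polynomial equations in $\sigma,\pi,y$.

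The hard part will be the elimination. I would eliminate the symmetric quantity $\sigma=x+z$ between those two equations (a resultant in $\sigma$) to obtain a single polynomial relation between $\pi=xz$ and $y$, that is, between $P$ and $Q$. Exactly as in Theorems \ref{the1}--\ref{t3o6ohe3}, this relation is expected to factor, and the factor not vanishing as $q\to 0$ (where $x,y,z\to\infty$ and $\pi/y^{2}\to\infty$) should be $\left(\pi^{2}+y^{4}\right)^{3}=4\left(y^{8}+1\right)\pi^{3}y^{2}$, which upon division by $\pi^{3}y^{6}$ is precisely \eqref{t3he3}. A shorter route, parallel to the proof of Theorem \ref{t3o6ohe3}, would bypass the resultant by recycling the theorems already proved: since $PQ=4xy^{2}z$ is the product variable of Theorem \ref{the1}, $(xz)^{4}$ is governed by Theorem \ref{t3o6ohe3}, and $xz/y^{2}+y^{2}/xz$ is the ratio combination of Theorem \ref{th5ue1}, one may express $\pi y^{2}$, $\pi^{4}$, and $G:=xz/y^{2}+y^{2}/xz$ as functions of the common auxiliary $h=x/z+z/x$, and then verify $G^{3}=4(y^{4}+y^{-4})$ before eliminating $h$. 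Either way the elimination together with the branch selection forced by $q\to 0$ is the only genuine obstacle, the transcription being routine.
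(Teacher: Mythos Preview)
Your proposal is correct, and in fact your secondary ``shorter route'' is essentially the paper's own argument: the paper rewrites Theorem~\ref{th5ue1} as a quadratic $u^{2}-(u+1)(P/Q+Q/P)^{2}-4=0$ in $u=\dfrac{qf(-q)f(-q^{36})}{f(-q^{4})f(-q^{9})}+\dfrac{f(-q^{4})f(-q^{9})}{qf(-q)f(-q^{36})}$, observes that this $u$ coincides with the auxiliary $h$ of Theorem~\ref{t3o6ohe3} while $P_{2}Q_{2}$ there equals the present $P$, solves for $u$, substitutes into \eqref{t3oohe3}, and factors. So your alternate suggestion and the paper's proof match.

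Your primary route---transcribing to $x=((1-\alpha)/\alpha)^{1/8}$, $y=((1-\beta)/\beta)^{1/8}$, $z=((1-\delta)/\delta)^{1/8}$, noting via \eqref{lemma 2}--\eqref{lemma 2.e} that $x$ and $z$ are two roots of the single quartic $T^{4}-2y^{3}T^{3}-2yT+y^{4}=0$, and eliminating $\sigma=x+z$ to get a relation in $\pi=xz$ and $y$---is a genuinely different and more self-contained argument. It trades dependence on the already heavy Theorems~\ref{th5ue1} and~\ref{t3o6ohe3} for a single resultant computation; the paper's way is quicker precisely because those theorems have already absorbed that work. One small slip of wording: when you select the branch you want the factor that \emph{does} vanish as $q\to 0$, not the one ``not vanishing''; the factor $(\pi^{2}+y^{4})^{3}-4(y^{8}+1)\pi^{3}y^{2}$ is indeed the one that is satisfied (both sides behave like $2^{-6}q^{-15/2}$ at leading order), exactly as in the paper's selection procedure.
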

\begin{proof}
Theorem \ref{th5ue1} can be written in the form
\begin{eqnarray}\label{eq7.7}
u^2-(u+1)\left(P/Q+Q/P\right)^2-4&=& 0,
\end{eqnarray}
where, $\displaystyle u= \frac{qf(-q)f(-q^{36})}{f(-q^4)f(-q^{9})}+\frac{f(-q^4)f(-q^{9})}{qf(-q)f(-q^{36})}.$\\
Solving \eqref{eq7.7} for $u$ and choosing the appropriate root, then employing in \eqref{t3oohe3} we obtain, after a straightforward lengthly calculation that
\begin{eqnarray*}\nonumber
(Q^6-P^3Q^5+3P^2Q^4+3P^4Q^2-16P^3Q+P^6)(Q^6+P^3Q^5+3P^2Q^4+3P^4Q^2+16P^3Q+P^6) && \\ \times
(PQ^6-16Q^5+3P^3Q^4+3P^5Q^2-P^8Q+P^7)(PQ^6+16Q^5+3P^3Q^4+3P^5Q^2+P^8Q+P^7) &=& 0. \label{eq3.7}
\end{eqnarray*}
We observe that the first factors of above equation vanish for $q\rightarrow 0$ and other factors does not vanish for that specific value. Dividing by $P^3Q^3$ and rearranging the terms. Hence we complete the proof.
\end{proof}
\section{General formulae and the Explicit evaluations }
In this section, we establish the formulae involving Weber-Ramanujan's class invariants for explicit evaluations of $\alpha_{9n}$, $\alpha_{n/9},$ $S_1\left(q\right),$ and $S_2\left(q\right)$ by modular modular equations to those derived in pervious section.
\begin{theorem}\label{he1nbx351}
If $g_n$ is defined as in \eqref{evq1} respectively,  then
\begin{eqnarray}\nonumber
\alpha_{9n}&=& \left(\sqrt{g^{24}_n+1}-g^{12}_n\right)^2\left(\sqrt{g^{8}_n+1}-g^{4}_n\right)^4 \\&& \times
\left(\sqrt{\frac{g^8_n+1+\sqrt{g^{16}_n-g^{8}_n+1}}{2}}-\sqrt{\frac{g^8_n-1+\sqrt{g^{16}_n-g^{8}_n+1}}{2}}\right)^8 \label{thmp1eqn2 }, \\ \nonumber
\alpha_{n/9}&=&\left(\sqrt{g^{24}_n+1}-g^{12}_n\right)^2\left(\sqrt{g^{8}_n+1}-g^{4}_n\right)^4 \\&& \times
\left(\sqrt{\frac{g^8_n+1+\sqrt{g^{16}_n-g^{8}_n+1}}{2}}+\sqrt{\frac{g^8_n-1+\sqrt{g^{16}_n-g^{8}_n+1}}{2}}\right)^8 \label{them1eqn22 }.
\end{eqnarray}
\end{theorem}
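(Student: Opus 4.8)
The plan is to specialize the mixed modular equations of Section 3 to the nome $q=e^{-\pi\sqrt n/3}$, so that the three moduli attached to $q,q^3,q^9$ are exactly $\alpha_{n/9},\alpha_n,\alpha_{9n}$ (degrees $1,3,9$), with the central value $\alpha_n$ already known through $g_n$. First I would transcribe the theta quotients appearing in Theorems \ref{the3} and \ref{t3o6ohe3} by means of \eqref{feq2} and \eqref{feq3}; the computation $\frac{f(-q)}{q^{1/8}f(-q^4)}=\sqrt 2\big(\tfrac{1-\alpha(q)}{\alpha(q)}\big)^{1/8}$ (and its analogues) lets me introduce the unknowns $u=\tfrac{1-\alpha_{9n}}{\alpha_{9n}}$, $v=\tfrac{1-\alpha_{n/9}}{\alpha_{n/9}}$ and the known $w^8=\tfrac{1-\alpha_n}{\alpha_n}$, so that in the end $\alpha_{9n}=\tfrac1{1+u}$ and $\alpha_{n/9}=\tfrac1{1+v}$.

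The strategy then separates into a \emph{product} step and a \emph{ratio} step. In Theorem \ref{the3} the quantities reduce to $P=2(uv)^{1/8}$ and $Q=2w^2$, whence the right side of \eqref{t3he3} becomes $Q^2+16/Q^2=8G_n^{12}$; taking the cube root forces $\tfrac{(uv)^{1/8}}{w^2}+\tfrac{w^2}{(uv)^{1/8}}=2G_n^4$, a quadratic whose admissible root (selected by the behaviour as $q\to0$) determines the product $uv$. Feeding the resulting value $P^4Q^4=16\sqrt{uv}$ into Theorem \ref{t3o6ohe3} turns \eqref{t3oohe3} into a sextic in $h=\tfrac{x}{y}+\tfrac{y}{x}$ (with $x=u^{1/8}$, $y=v^{1/8}$) whose coefficients now involve only $g_n$; solving it, and then the quadratic for $h$, isolates the ratio $u/v$, the two roots corresponding to the two orderings $\alpha_{9n}<\alpha_n<\alpha_{n/9}$. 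Combining $uv$ and $u/v$ gives $u$ and $v$ individually. (Both $\alpha_{9n}$ and $\alpha_{n/9}$ are degree-$3$ neighbours of $\alpha_n$ in the sense of \eqref{lemma 2}, which is the structural reason the two target formulas share their symmetric part and differ only through a reciprocal factor.)

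The final step is the conversion from $G_n$ to $g_n$ and the radical simplification. The key identities, all consequences of \eqref{eq1}, are $g_n^{24}+1=\tfrac{(1+\alpha_n)^2}{4\alpha_n}$ — which collapses the first factor, since $\sqrt{g_n^{24}+1}-g_n^{12}=\sqrt{\alpha_n}$, so that $\big(\sqrt{g_n^{24}+1}-g_n^{12}\big)^2=\alpha_n$ — together with the surfacing of $\sqrt{g_n^8+1}$ in the product step and of $\sqrt{g_n^{16}-g_n^8+1}$ in the ratio step. I would check that the symmetric combination reduces to $\sqrt{\alpha_{9n}\alpha_{n/9}}=\alpha_n\big(\sqrt{g_n^8+1}-g_n^4\big)^4$ and the antisymmetric one to $\sqrt{\alpha_{9n}/\alpha_{n/9}}=C_-^{8}$, where $C_\mp=\sqrt{\tfrac{g_n^8+1+\sqrt{g_n^{16}-g_n^8+1}}2}\mp\sqrt{\tfrac{g_n^8-1+\sqrt{g_n^{16}-g_n^8+1}}2}$ satisfy $C_-C_+=1$; multiplying these two yields $\alpha_{9n}$, and replacing $C_-$ by its reciprocal $C_+$ yields $\alpha_{n/9}$, which is exactly the stated $\mp$ dichotomy.

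The main obstacle will be the middle and final steps: pushing the sextic of Theorem \ref{t3o6ohe3} through to a clean closed form for the ratio, and then recognising the unwieldy $G_n$-radicals as the nested forms $\sqrt{(g_n^8\pm1+\sqrt{g_n^{16}-g_n^8+1})/2}$. Equally delicate is the bookkeeping of sign choices — at the cube root, at each of the three square roots, and in the quadratic for $h$ — since only one combination is consistent with the expansions as $q\to0$ and with the monotonicity of $\alpha_m$ in $m$; an incorrect choice produces a formally valid but spurious root.
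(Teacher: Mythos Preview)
Your overall architecture --- determine the product $\alpha_{9n}\alpha_{n/9}$ from Theorem~\ref{the3}, determine the ratio $\alpha_{9n}/\alpha_{n/9}$ from a second modular equation, then multiply --- is exactly the paper's plan, and your identification $(\sqrt{g_n^{24}+1}-g_n^{12})^2=\alpha_n$ for the first factor is correct and is implicitly what the paper uses via \eqref{them1eqn3 }. However, two of your choices diverge from the paper's and make the execution substantially harder.

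\textbf{Ratio step.} You propose to feed the known $P^4Q^4$ into Theorem~\ref{t3o6ohe3}, which leaves a genuine sextic in $h$. The paper instead uses Theorem~\ref{th5ue1} for this step: its equation \eqref{ehma}, once transcribed via \eqref{alpbeta}, becomes the \emph{quadratic} \eqref{them1eqn23} in $l=(\alpha_{9n}/\alpha_{n/9})^{1/8}+(\alpha_{n/9}/\alpha_{9n})^{1/8}$, whose coefficient $2^{4/3}(1/\sqrt{\alpha_n}-\sqrt{\alpha_n})^{2/3}$ is $4g_n^8$ by \eqref{them1eqn3 }. Solving that quadratic immediately gives $l=2\bigl(g_n^8+\sqrt{g_n^{16}-g_n^8+1}\bigr)$ and hence \eqref{t31meqn2}. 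Since Theorem~\ref{t3o6ohe3} was itself manufactured from Theorems~\ref{the1} and~\ref{th5ue1} by squaring, your sextic is carrying spurious sign-branches; it can in principle be factored back, but you would essentially be undoing the proof of Theorem~\ref{t3o6ohe3}. Going through Theorem~\ref{th5ue1} directly avoids the obstacle you flag.

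\textbf{Landing in $g_n$ rather than $G_n$.} You specialise Theorem~\ref{the3} as written, which produces $Q^2+16/Q^2=8G_n^{12}$ and forces a later $G_n\!\to\!g_n$ conversion. The paper instead replaces $q$ by $-q$ in \eqref{t3he3} before specialising, and couples this with an auxiliary identity (their \eqref{aph1}) so that the right-hand side is expressed through $f(q^3)/f(-q^6)$ and hence directly through $g_n$ via \eqref{them1eqn3 }. The paper also works with $\alpha_{9n}\alpha_{n/9}$ and $\alpha_{9n}/\alpha_{n/9}$ themselves rather than your auxiliaries $u=(1-\alpha_{9n})/\alpha_{9n}$, $v=(1-\alpha_{n/9})/\alpha_{n/9}$; note that $uv$ and $u/v$ do not by themselves give $\alpha_{9n}\alpha_{n/9}=1/((1+u)(1+v))$ without the extra symmetric function $u+v$, so your ``combine $uv$ and $u/v$'' step hides an additional simplification that the paper's variables sidestep entirely.
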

\begin{proof}
Combining \eqref{feq1} and \eqref{feq3} with $q = e^{-\pi\sqrt{n}}$, then simplifying, we obtain that
\begin{align}\label{e87q1}
\alpha_n = \left(\frac{f(q)}{2^{1/2}q^{1/8}f(-q^4)}\right)^{-8}.
\end{align}
Employing \eqref{entry24} in Theorem 3.2.2 \cite[p.21]{Yi-Thesis-2000} along with replacing  $q$ by $q^3$, we obtain that
\begin{eqnarray}\label{aph1}
\left(\frac{f(-q^3)}{q^{3/8}f(-q^{12})}\right)^4 + 16\left(\frac{q^{3/8}f(-q^{12})}{f(-q^3)}\right)^4&=& \left(\frac{f(q^3)}{q^{1/8}f(-q^{6})}\right)^{12}.
\end{eqnarray}
Replacing $q$ by $-q$ in \eqref{t3he3}, then applying \eqref{aph1}, and \eqref{e87q1} with $q = e^{-\pi\sqrt{n/9}}$, we deduce that
\begin{eqnarray*}\label{them1eqn2 }
\left(\frac{\alpha^2_n}{\alpha_{9n}\alpha_{n/9}}\right)^{1/8}-
\left(\frac{\alpha_{9n}\alpha_{n/9}}{\alpha^2_n}\right)^{1/8}&=& 2g^4_n .\label{them1eqn23 }
\end{eqnarray*}
On solving the above equation  and choosing the appropriate root, then we arrive at
\begin{eqnarray}
\alpha_{9n}\alpha_{n/9}&=& \alpha^2_n\left(\sqrt{g^{8}_n+1}-g^{4}_n\right)^8. \label{t1eqn2}
\end{eqnarray}
We observed that some representation for $\displaystyle \alpha_n$ in terms of $g_n$. This is given by \cite[p.289, Eq.(9.27)]{Berndt-notebook-5}
\begin{eqnarray}\label{them1eqn3 }
\frac{1}{\sqrt{\alpha_n}}-\sqrt{\alpha_n} &=& 2g^{12}_n.
\end{eqnarray}
Employing \eqref{them1eqn3 } in \eqref{t1eqn2}, we conclude that
\begin{eqnarray}\label{t1keqn2}
\alpha_{9n}\alpha_{n/9}&=& \left(\sqrt{g^{24}_n+1}-g^{12}_n\right)^4\left(\sqrt{g^{8}_n+1}-g^{4}_n\right)^8.
\end{eqnarray}
By Theorem \ref{th5ue1},  we obtain that
\begin{eqnarray}
l^2-2^{4/3}\left(\frac{1}{\sqrt{\alpha_n}}-\sqrt{\alpha_n}\right)^{2/3}
\left(l-1\right)-4&=&0, \label{them1eqn23}
\end{eqnarray}
where, $\displaystyle l = \left(\alpha_{9n}/\alpha_{n/9}\right)^{1/8}+\left(\alpha_{n/9}/\alpha_{9n}\right)^{1/8}$. Now, employing \eqref{them1eqn3 } in \eqref{them1eqn23}, then solving for $l$ and  choosing positive real root, we deduce that
\begin{eqnarray}
\left(\frac{\alpha_{9n}}{\alpha_{n/9}}\right)^{1/8}+\left(\frac{\alpha_{n/9}}{\alpha_{9n}}\right)^{1/8}&=& 2\left(g^8_n+\sqrt{g^{16}_n-g^{8}_n+1}\right)
.\label{them1eqn23 }
\end{eqnarray}
On solving the above equation and choosing the appropriate root,  we obtain that
\begin{eqnarray}
\frac{\alpha_{9n}}{\alpha_{n/9}}&=&
\left(\sqrt{\frac{g^8_n+1+\sqrt{g^{16}_n-g^{8}_n+1}}{2}}-\sqrt{\frac{g^8_n-1+\sqrt{g^{16}_n-g^{8}_n+1}}{2}}\right)^{16}. \label{t31meqn2}
\end{eqnarray}
By combining \eqref{t1keqn2} and \eqref{t31meqn2}, this completes the proof.
\end{proof}
\begin{corollary}\label{nb1}
If $\displaystyle S_1\left(q\right)$ and $g_n$ are defined as in \eqref{eq0.1}, and \eqref{evq1} respectively,  then
\begin{eqnarray}\nonumber
S_1\left(e^{-\pi3\sqrt{n}}\right)&=& \frac{1}{\sqrt{2}}\left(\sqrt{g^{24}_n+1}-g^{12}_n\right)^{1/4}
\left(\sqrt{g^{8}_n+1}-g^{4}_n\right)^{1/2} \\&& \times
\left(\sqrt{\frac{g^8_n+1+\sqrt{g^{16}_n-g^{8}_n+1}}{2}}-\sqrt{\frac{g^8_n-1+\sqrt{g^{16}_n-g^{8}_n+1}}{2}}\right), \label{thcv}\\ \nonumber
S_1\left(e^{-\pi\sqrt{n}/3}\right)&=&\frac{1}{\sqrt{2}}\left(\sqrt{g^{24}_n+1}-g^{12}_n\right)^{1/4}
\left(\sqrt{g^{8}_n+1}-g^{4}_n\right)^{1/2} \\&& \times
\left(\sqrt{\frac{g^8_n+1+\sqrt{g^{16}_n-g^{8}_n+1}}{2}}+\sqrt{\frac{g^8_n-1+\sqrt{g^{16}_n-g^{8}_n+1}}{2}}\right). \label{thcv1}
\end{eqnarray}
\end{corollary}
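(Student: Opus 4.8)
The plan is to read the corollary as a direct specialization of Zhang's general evaluation \eqref{2.4} together with the explicit values of $\alpha_{9n}$ and $\alpha_{n/9}$ already furnished by Theorem \ref{he1nbx351}. The essential observation is purely notational: since $e^{-\pi 3\sqrt{n}}=e^{-\pi\sqrt{9n}}$ and $e^{-\pi\sqrt{n}/3}=e^{-\pi\sqrt{n/9}}$, applying \eqref{2.4} with $n$ replaced by $9n$ and by $n/9$ respectively gives
\begin{align*}
S_1\left(e^{-\pi 3\sqrt{n}}\right)=\frac{\alpha_{9n}^{1/8}}{\sqrt{2}}
\qquad\text{and}\qquad
S_1\left(e^{-\pi\sqrt{n}/3}\right)=\frac{\alpha_{n/9}^{1/8}}{\sqrt{2}}.
\end{align*}
Since Zhang's formula is valid for every positive argument and both $9n$ and $n/9$ are again positive, this step needs no further justification.

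First I would substitute the expression for $\alpha_{9n}$ from the first formula of Theorem \ref{he1nbx351} into the left identity above and extract the eighth root factor by factor. The three grouped factors carry exponents $2$, $4$ and $8$; raising to the power $1/8$ sends these to $1/4$, $1/2$ and $1$ respectively, while the overall $1/\sqrt{2}$ survives, so the right-hand side collapses to exactly \eqref{thcv}. The identical computation with $\alpha_{n/9}$, taken from the second formula of Theorem \ref{he1nbx351}, produces \eqref{thcv1}; the sole difference is the sign inside the innermost nested radical, which is inherited verbatim from the $-$ versus $+$ distinguishing $\alpha_{9n}$ from $\alpha_{n/9}$.

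The only point demanding attention is the legitimacy of the term-by-term eighth root. Each grouped factor is manifestly a positive real for $g_n>0$: one has $\sqrt{g^{24}_n+1}>g^{12}_n$ and $\sqrt{g^{8}_n+1}>g^{4}_n$, and the nested-radical factor is a positive real by construction, so the positive eighth root is unambiguous and distributes over the product. This is consistent with $S_1\left(e^{-\pi\sqrt{m}}\right)>0$ for all $m>0$, matching the sign already fixed in \eqref{2.4}. Consequently the corollary is not an independent result but a bookkeeping consequence of Theorem \ref{he1nbx351}; the main obstacle, namely the analytic work of solving the relevant quadratics and selecting the correct roots, was already dispatched in proving that theorem, and nothing beyond exponent arithmetic remains here.
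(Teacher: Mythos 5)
Your proposal is correct and is essentially identical to the paper's own (one-line) proof, which likewise obtains the corollary by substituting the expressions for $\alpha_{9n}$ and $\alpha_{n/9}$ from Theorem \ref{he1nbx351} into Zhang's formula \eqref{2.4}. Your added remarks on the positivity of each factor and the resulting unambiguity of the eighth root are a harmless elaboration of the same argument.
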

\begin{proof}
Employing pervious theorem in \eqref{2.4}, it is not difficult to deduce to our corollary.
\end{proof}
\begin{theorem}\label{he1nb351}
If $\displaystyle S_2\left(q\right)$ and $G_n$ are defined as in \eqref{eqksn0.1}, and \eqref{evq1} respectively,  then
\begin{eqnarray}\nonumber
S_2\left(e^{-\pi3\sqrt{n}}\right)&=&\frac{1}{\sqrt{2}}\left(G^{12}_n-\sqrt{G^{24}_n-1}\right)^{1/4}\left(G^{4}_n-\sqrt{G^{8}_n-1}\right)^{1/2}\\&&
\times \left(\sqrt{\frac{G^8_n+1+\sqrt{G^{16}_n+G^{8}_n+1}}{2}}-\sqrt{\frac{G^8_n-1+\sqrt{G^{16}_n+G^{8}_n+1}}{2}}\right)\label{a1eqn2}, \\ \nonumber
S_2\left(e^{-\pi\sqrt{n}/3}\right)&=&\frac{1}{\sqrt{2}}\left(G^{12}_n-\sqrt{G^{24}_n-1}\right)^{1/4}\left(G^{4}_n-\sqrt{G^{8}_n-1}\right)^{1/2}\\&&
\times \left(\sqrt{\frac{G^8_n+1+\sqrt{G^{16}_n+G^{8}_n+1}}{2}}+\sqrt{\frac{G^8_n-1+\sqrt{G^{16}_n+G^{8}_n+1}}{2}}\right)\label{a1eqn21}.
\end{eqnarray}
\end{theorem}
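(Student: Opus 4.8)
The plan is to run the argument of Theorem \ref{he1nbx351} and Corollary \ref{nb1} in parallel, with the pair $(S_2,G_n)$ and the quantity $\alpha_n/(1-\alpha_n)$ replacing $(S_1,g_n)$ and $\alpha_n$. First I would record the $S_2$-analogue of Zhang's formula \eqref{2.4}. From \eqref{eqksn0.1} and the product expansions one gets $S_2(q)=q^{1/8}f(-q^4)/f(-q)$; transcribing this through \eqref{feq2} and \eqref{feq3} the powers of $q$ cancel and leave $S_2\left(e^{-\pi\sqrt n}\right)=\tfrac{1}{\sqrt2}\left(\alpha_n/(1-\alpha_n)\right)^{1/8}$. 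Writing $\beta_n=\alpha_n/(1-\alpha_n)$, the class-invariant identity \eqref{eq1} gives $1/\sqrt{\alpha_n(1-\alpha_n)}=2G_n^{12}$, equivalently $\sqrt{\beta_n}=G_n^{12}-\sqrt{G_n^{24}-1}$ (the smaller root being forced by $\alpha_n\to0$ as $n\to\infty$). In particular the middle value is $S:=S_2\left(e^{-\pi\sqrt n}\right)=\tfrac{1}{\sqrt2}\bigl(G_n^{12}-\sqrt{G_n^{24}-1}\bigr)^{1/4}$, a factor that will reappear below.

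The key observation is that after setting $q=e^{-\pi\sqrt n/3}$ the eta-quotients $P,Q$ of Theorems \ref{the3} and \ref{th5ue1} become monomials in the three values $s:=S_2\left(e^{-\pi\sqrt n/3}\right)$, $S$, and $t:=S_2\left(e^{-3\pi\sqrt n}\right)$: a short computation with the $q$-exponents gives $P=1/(st)$, $Q=1/S^2$ in Theorem \ref{the3}, and $P=S/s$, $Q=t/S$ in Theorem \ref{th5ue1}. Thus in both cases $P/Q=S^2/(st)$, while in Theorem \ref{th5ue1} one also has $PQ=t/s$. I would then feed these in. In Theorem \ref{the3} the right-hand side $Q^2+16/Q^2=S^{-4}+16S^4$ telescopes, by the identity $1/\sqrt{\alpha_n(1-\alpha_n)}=2G_n^{12}$, to $8G_n^{12}$, so that $P/Q+Q/P=2G_n^4$; solving the quadratic $S^2/(st)+st/S^2=2G_n^4$ for the product (the smaller root, again fixed by the $n\to\infty$ order) gives the closed form $st=S^2\bigl(G_n^4-\sqrt{G_n^8-1}\bigr)=\tfrac12\bigl(G_n^{12}-\sqrt{G_n^{24}-1}\bigr)^{1/2}\bigl(G_n^4-\sqrt{G_n^8-1}\bigr)$.

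Next I would extract the ratio from Theorem \ref{th5ue1}. Since $P/Q+Q/P=2G_n^4$ and $1/PQ+PQ=t/s+s/t=:w$, that theorem collapses to the single quadratic $w^2=4G_n^8(w+1)+4$, whose positive root is $w=2\bigl(G_n^8+\sqrt{G_n^{16}+G_n^8+1}\bigr)$. At this point the product $st$ and the symmetric sum $s/t+t/s=w$ are both known, so the pair $\{s,t\}$ is determined. Setting $A=\tfrac12\bigl(G_n^8+1+\sqrt{G_n^{16}+G_n^8+1}\bigr)$ and $B=\tfrac12\bigl(G_n^8-1+\sqrt{G_n^{16}+G_n^8+1}\bigr)$, one checks $A-B=1$ and $A+B=G_n^8+\sqrt{G_n^{16}+G_n^8+1}$, so $(\sqrt A-\sqrt B)^2$ and $(\sqrt A+\sqrt B)^2$ are reciprocal and sum to $w$; hence $t/s=(\sqrt A-\sqrt B)^2$ and $s/t=(\sqrt A+\sqrt B)^2$. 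Multiplying by $st$ and taking square roots gives $t=\sqrt{st}\,(\sqrt A-\sqrt B)$ and $s=\sqrt{st}\,(\sqrt A+\sqrt B)$, which, on substituting the closed form for $st$, are exactly \eqref{a1eqn2} and \eqref{a1eqn21}.

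The steps I expect to cost the most care are the several root selections — for $\sqrt{\beta_n}$, for the product $st$, for $w$, and for the assignment of $\sqrt A\pm\sqrt B$ to $t$ versus $s$ — each of which I would justify by comparing leading orders as $n\to\infty$ (equivalently $q\to0$), where $s$ and $t$ vanish at the rates $e^{-\pi\sqrt n/24}$ and $e^{-3\pi\sqrt n/8}$. The one genuinely non-routine point is recognising that the nested radicals produced by the two quadratics collapse cleanly, and this hinges entirely on the single identity $A-B=1$.
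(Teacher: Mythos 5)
Your proposal is correct and follows exactly the route the paper indicates for this theorem (which it leaves as "analogous to Theorem \ref{he1nbx351}"): Theorem \ref{the3} yields the product $st$ after the right-hand side $Q^2+16/Q^2=S^{-4}+16S^4$ collapses to $8G_n^{12}$ via $4\alpha_n(1-\alpha_n)=G_n^{-24}$, and Theorem \ref{th5ue1} yields the symmetric sum $t/s+s/t$, with the nested radicals simplifying because $A-B=1$. Working directly with the continued-fraction values $s,S,t$ instead of first deriving $\alpha_{9n}/(1-\alpha_{9n})$ and $\alpha_{n/9}/(1-\alpha_{n/9})$ and then invoking Zhang's formula is only a cosmetic repackaging of the paper's intended argument, and all the root selections you flag are justified by the stated $q\to 0$ asymptotics.
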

\begin{proof}
The proof of our theorem can be obtained by Theorem \ref{th5ue1}, and Theorem \ref{the3}. Since the proof is analogous to Theorem \ref{he1nbx351}, and so, we omit the details.
\end{proof}
\section{Explicit evaluations}
After obtaining class invariants $G_n$, and $g_{n}$, then, Theorem \ref{he1nbx351}, Corollary \ref{nb1}, and Theorem \ref{he1nb351} can be utilized to calculate several explicit  values of Ramanujan's singular moduli and Ramanujan- Selberg continued fraction. We conclude the present work with following the computations.
\begin{theorem}\label{the5.1} We have
\begin{eqnarray*}
\alpha_{36}&=& \left(\sqrt{2}-1\right)^4\left(\sqrt{3}-\sqrt{2}\right)^4 \left(\sqrt{\frac{\sqrt{3}+3}{2}}-\sqrt{\frac{\sqrt{3}+1}{2}}\right)^8, \label{expie1}\\
\alpha_{4/9}&=& \left(\sqrt{2}-1\right)^4\left(\sqrt{3}-\sqrt{2}\right)^4 \left(\sqrt{\frac{\sqrt{3}+3}{2}}+\sqrt{\frac{\sqrt{3}+1}{2}}\right)^8\label{expie1}.
\end{eqnarray*}
\end{theorem}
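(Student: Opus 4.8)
The plan is to observe that Theorem \ref{he1nbx351} already expresses $\alpha_{9n}$ and $\alpha_{n/9}$ entirely in terms of the single class invariant $g_n$. Since $36 = 9\cdot 4$ and $4/9$ both arise from $n = 4$, the two target values are obtained simply by specializing that theorem at $n=4$. Hence the whole proof reduces to pinning down the one quantity $g_4$ and then carrying out the induced radical simplifications.

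First I would determine $g_4$. The modulus $\alpha_4$ is one of Ramanujan's classical singular values, namely $\alpha_4 = (\sqrt 2 - 1)^4$ (equivalently $k_4 = 3 - 2\sqrt 2$). Feeding this into the representation \eqref{them1eqn3 } gives $\tfrac{1}{\sqrt{\alpha_4}} - \sqrt{\alpha_4} = (3 + 2\sqrt 2) - (3 - 2\sqrt 2) = 4\sqrt 2$, whence $g_4^{12} = 2\sqrt 2$. The same value follows independently from \eqref{eq1}, since there $g_4^{24} = (1-\alpha_4)^2/(4\alpha_4) = 8$, which is a useful cross-check. From $g_4^{12} = 2\sqrt 2$ I then extract the exact powers actually needed: $g_4^{24} = 8$, $g_4^{16} = 4$, $g_4^{8} = 2$, and $g_4^{4} = \sqrt 2$.

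Next I would substitute these powers into the two formulas of Theorem \ref{he1nbx351} and simplify the three factors one at a time. The first factor becomes $\bigl(\sqrt{g_4^{24}+1} - g_4^{12}\bigr)^2 = (3 - 2\sqrt 2)^2 = (\sqrt 2 - 1)^4$; the second becomes $\bigl(\sqrt{g_4^{8}+1} - g_4^{4}\bigr)^4 = (\sqrt 3 - \sqrt 2)^4$. For the inner nested radicals I note that $g_4^{16} - g_4^{8} + 1 = 3$, so the two inner expressions collapse to $\tfrac{g_4^{8} + 1 + \sqrt 3}{2} = \tfrac{\sqrt 3 + 3}{2}$ and $\tfrac{g_4^{8} - 1 + \sqrt 3}{2} = \tfrac{\sqrt 3 + 1}{2}$. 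Assembling the three factors reproduces the claimed expression for $\alpha_{36}$ (taking the minus sign in the last factor) and for $\alpha_{4/9}$ (taking the plus sign).

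Since every step after fixing $g_4$ is a direct algebraic simplification, the only genuine input is the value $g_4^{12} = 2\sqrt 2$. The mild points worth double-checking are the branch bookkeeping inherited from Theorem \ref{he1nbx351} — namely that, under $n = 4$, the minus branch of the last factor is the one producing $\alpha_{36}$ and the plus branch the one producing $\alpha_{4/9}$ — and the confirmation that the resulting nested radicals $\sqrt{(\sqrt 3 + 3)/2}\pm\sqrt{(\sqrt 3 + 1)/2}$ do not admit a further collapse, so that the displayed closed forms are indeed the intended final answer.
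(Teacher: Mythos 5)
Your proposal is correct and follows essentially the same route as the paper: specialize Theorem \ref{he1nbx351} at $n=4$ with $g_4 = 2^{1/8}$ (equivalently $g_4^{12}=2\sqrt{2}$), simplify the three factors, and use $3-2\sqrt{2}=(\sqrt{2}-1)^2$ to reach the stated closed forms. The only cosmetic difference is that you derive $g_4$ from the classical value $\alpha_4=(\sqrt{2}-1)^4$ via \eqref{them1eqn3 }, whereas the paper simply cites $g_4=2^{1/8}$ from Yi's thesis.
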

\begin{proof}
Letting $n=4$, $g_4=2^{1/8}$  \cite[ Theorem 4.1.2 (i)]{Yi-Thesis-2000} and  employing this value in \eqref{thmp1eqn2 }, and \eqref{them1eqn22 }, we evaluate that
\begin{eqnarray}\label{expli1}
\alpha_{36}&=& \left(3-2\sqrt{2}\right)^2\left(\sqrt{3}-\sqrt{2}\right)^4 \left(\sqrt{\frac{\sqrt{3}+3}{2}}-\sqrt{\frac{\sqrt{3}+1}{2}}\right)^8,\\
\alpha_{4/9}&=& \left(3-2\sqrt{2}\right)^2\left(\sqrt{3}-\sqrt{2}\right)^4 \left(\sqrt{\frac{\sqrt{3}+3}{2}}+\sqrt{\frac{\sqrt{3}+1}{2}}\right)^8. \label{expie1}
\end{eqnarray}
By \cite[p.284, Eq.(9.5)]{Berndt-notebook-5}, we have
\begin{eqnarray}\label{expli2}
3-2\sqrt{2}&=& \left(\sqrt{2}-1\right)^2
\end{eqnarray}
Employing \eqref{expli2} in \eqref{expli1}, and \eqref{expie1}, we arrive at desired results.
\end{proof}
\begin{theorem}\label{the5.2} We have
\begin{eqnarray}\label{72}
\alpha_{72}&=& \left(\sqrt{\frac{\sqrt{2}+2}{2}}-\sqrt{\frac{\sqrt{2}}{2}}\right)^{16}
\left(\sqrt{\frac{3\sqrt{2}+6}{2}}-\sqrt{\frac{3\sqrt{2}+4}{2}}\right)^{8},\\
\alpha_{8/9}&=& \left(\sqrt{\frac{\sqrt{2}+2}{2}}-\sqrt{\frac{\sqrt{2}}{2}}\right)^{16}
\left(\sqrt{\frac{3\sqrt{2}+6}{2}}+\sqrt{\frac{3\sqrt{2}+4}{2}}\right)^{8}. \label{89}
\end{eqnarray}
\end{theorem}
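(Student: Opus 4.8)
The plan is to obtain both values as the $n=8$ specialisation of Theorem \ref{he1nbx351}: since $\alpha_{72}=\alpha_{9\cdot 8}$ and $\alpha_{8/9}$ are exactly the $n=8$ instances, the formula \eqref{thmp1eqn2 } gives $\alpha_{72}$ and \eqref{them1eqn22 } gives $\alpha_{8/9}$, so everything reduces to substituting the single class invariant $g_8$ and simplifying, precisely as $g_4=2^{1/8}$ was used in Theorem \ref{the5.1}. First I would record $g_8^8 = 2+2\sqrt2$. This may be quoted from the literature alongside $g_4$, or derived from the standard relation $g_{4n} = 2^{1/4} g_n G_n$ (a consequence of $\chi(q)\chi(-q)=\chi(-q^2)$ together with the definitions \eqref{evq1}) taken at $n=2$, using $g_2 = 1$ and $G_2^8 = (1+\sqrt2)/2$.

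With $g_8^8 = 2+2\sqrt2$ in hand, the three surds occurring in Theorem \ref{he1nbx351} denest immediately, because each radicand is a square in $\mathbf{Z}[\sqrt2]$:
\begin{eqnarray*}
g_8^8+1 &=& 3+2\sqrt2 = (1+\sqrt2)^2, \\
g_8^{24}+1 &=& 57+40\sqrt2 = (5+4\sqrt2)^2, \\
g_8^{16}-g_8^8+1 &=& 11+6\sqrt2 = (3+\sqrt2)^2 .
\end{eqnarray*}
Feeding $\sqrt{g_8^{16}-g_8^8+1} = 3+\sqrt2$ into the last factor of \eqref{thmp1eqn2 } and \eqref{them1eqn22 } collapses it at once to $\left(\sqrt{(3\sqrt2+6)/2}\mp\sqrt{(3\sqrt2+4)/2}\right)^8$, which is precisely the second factor of \eqref{72} and \eqref{89}, the sign separating $\alpha_{72}$ from $\alpha_{8/9}$.

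The step I expect to be the real obstacle is collapsing the product of the first two factors,
\[
\left(\sqrt{g_8^{24}+1}-g_8^{12}\right)^2\left(\sqrt{g_8^8+1}-g_8^4\right)^4 ,
\]
into the single sixteenth power $\left(\sqrt{(\sqrt2+2)/2}-\sqrt{\sqrt2/2}\right)^{16}$. Unlike the surds above, the half-integer powers $g_8^4 = \sqrt{2+2\sqrt2}$ and $g_8^{12} = \sqrt{56+40\sqrt2}$ remain genuinely nested, so this is a denesting problem rather than a rational simplification. Writing $u = \sqrt{(\sqrt2+2)/2}-\sqrt{\sqrt2/2}$, the task reduces to the two identities $\sqrt{g_8^8+1}-g_8^4 = u^2$ and $\sqrt{g_8^{24}+1}-g_8^{12} = u^4$; granting these, the product becomes $(u^4)^2(u^2)^4 = u^{16}$, as required.

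I would verify the two identities by squaring rather than by guessing a denested form. A direct expansion gives $u^2 = (1+\sqrt2)-\sqrt{2+2\sqrt2}$, which is exactly $\sqrt{g_8^8+1}-g_8^4$; squaring once more yields $u^4 = (5+4\sqrt2)-2(1+\sqrt2)\sqrt{2+2\sqrt2}$, so the second identity is equivalent to the single surd equality $2(1+\sqrt2)\sqrt{2+2\sqrt2}=g_8^{12}$, confirmed by $4(3+2\sqrt2)(2+2\sqrt2)=56+40\sqrt2$. At each extraction of a square root the relevant difference is positive, which fixes the correct branch, just as the sign choices are fixed in the proof of Theorem \ref{he1nbx351}. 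Assembling the collapsed factors then gives \eqref{72} and \eqref{89}.
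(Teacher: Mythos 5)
Your proposal is correct and follows essentially the same route as the paper: specialise Theorem \ref{he1nbx351} at $n=8$ with $g_8^8=2+2\sqrt{2}$, extract the three surds $1+\sqrt{2}$, $5+4\sqrt{2}$, $3+\sqrt{2}$, and denest the first two factors into the sixteenth power of $\sqrt{(\sqrt{2}+2)/2}-\sqrt{\sqrt{2}/2}$. The only cosmetic difference is that the paper quotes Yi's thesis for $g_8$ and Berndt's Lemma 9.10 for the denesting $5+4\sqrt{2}-\sqrt{56+40\sqrt{2}}=u^4$, whereas you verify both by direct computation, which is equally valid.
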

\begin{proof}
Letting $n=8$, $g_8=2^{1/8}\left(\sqrt{2}+1\right)^{1/8}$  \cite[ Theorem 4.1.2 (ii)]{Yi-Thesis-2000}. It follows that
\begin{align}\label{abikd}
\sqrt{g_8^{24}+1}= 5+4\sqrt{2}
\quad ; \quad
\sqrt{g_8^{8}+1}= \sqrt{2}+1
\quad ; \quad
\sqrt{g_8^{16}-g_8^{8}+1} = 3+\sqrt{2}.
\end{align}
Applying \eqref{abikd} in \eqref{thmp1eqn2 }, we deduce that
\begin{eqnarray}\label{al72}
\alpha_{72}&=& \left(5+4\sqrt{2}-\sqrt{56+40\sqrt{2}}\right)^2\left(\sqrt{2}+1-\sqrt{2+2\sqrt{2}}\right)^4
\left(\sqrt{\frac{3\sqrt{2}+6}{2}}-\sqrt{\frac{3\sqrt{2}+4}{2}}\right)^{8}.
\end{eqnarray}
Now we apply Lemma 9.10 \cite[p. 292]{Berndt-notebook-5} with $r = 5+4\sqrt{2}$. Then $t = (\sqrt{2}+1)/2$ and so
\begin{eqnarray}\label{al172}
5+4\sqrt{2}-\sqrt{56+40\sqrt{2}}&=& \left(\sqrt{\frac{\sqrt{2}+2}{2}}-\sqrt{\frac{\sqrt{2}}{2}}\right)^{4}.
\end{eqnarray}
Further,
\begin{eqnarray}\label{al173}
\sqrt{2}+1-\sqrt{2+2\sqrt{2}}&=& \left(\sqrt{\frac{\sqrt{2}+2}{2}}-\sqrt{\frac{\sqrt{2}}{2}}\right)^{2}.
\end{eqnarray}
From \eqref{al72}, \eqref{al172}, and \eqref{al173}, we deduce \eqref{72}. Similarly we arrive at \eqref{89}.
\end{proof}
\begin{theorem} We have
\begin{eqnarray*}
S_1\left(e^{-6\pi}\right)&=& \frac{1}{\sqrt{2}}\left(\sqrt{2}-1\right)^{1/2}\left(\sqrt{3}-\sqrt{2}\right)^{1/2} \left(\sqrt{\frac{\sqrt{3}+3}{2}}-\sqrt{\frac{\sqrt{3}+1}{2}}\right),\\
S_1\left(e^{-2\pi/3}\right)&=& \frac{1}{\sqrt{2}}\left(\sqrt{2}-1\right)^{1/2}\left(\sqrt{3}-\sqrt{2}\right)^{1/2} \left(\sqrt{\frac{\sqrt{3}+3}{2}}+\sqrt{\frac{\sqrt{3}+1}{2}}\right),\\
S_1\left(e^{-\pi6\sqrt{2}}\right)&=& \frac{1}{\sqrt{2}}\left(\sqrt{\frac{\sqrt{2}+2}{2}}-\sqrt{\frac{\sqrt{2}}{2}}\right)^{2}
\left(\sqrt{\frac{3\sqrt{2}+6}{2}}-\sqrt{\frac{3\sqrt{2}+4}{2}}\right),\\
S_1\left(e^{-\pi2\sqrt{2}/3}\right)&=& \frac{1}{\sqrt{2}}\left(\sqrt{\frac{\sqrt{2}+2}{2}}-\sqrt{\frac{\sqrt{2}}{2}}\right)^{2}
\left(\sqrt{\frac{3\sqrt{2}+6}{2}}+\sqrt{\frac{3\sqrt{2}+4}{2}}\right).
\end{eqnarray*}
\end{theorem}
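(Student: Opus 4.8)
The plan is to recognize the four arguments as specializations of Corollary \ref{nb1} at $n=4$ and $n=8$, and then to reuse the algebraic simplifications already carried out in Theorems \ref{the5.1} and \ref{the5.2}. Concretely, since $e^{-6\pi}=e^{-\pi3\sqrt{4}}$ and $e^{-2\pi/3}=e^{-\pi\sqrt{4}/3}$, the first two values come from \eqref{thcv} and \eqref{thcv1} with $n=4$; likewise $e^{-\pi6\sqrt{2}}=e^{-\pi3\sqrt{8}}$ and $e^{-\pi2\sqrt{2}/3}=e^{-\pi\sqrt{8}/3}$, so the last two come from \eqref{thcv} and \eqref{thcv1} with $n=8$. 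Thus no new analytic input is required: the whole theorem is the substitution of the known class invariants $g_4=2^{1/8}$ and $g_8=2^{1/8}(\sqrt{2}+1)^{1/8}$ (from Yi's thesis) into Corollary \ref{nb1}, followed by surd simplification.

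First I would treat $n=4$. Inserting $g_4=2^{1/8}$ gives $g_4^{8}=2$, $g_4^{12}=2\sqrt{2}$, $g_4^{16}=4$, $g_4^{24}=8$, whence $\sqrt{g_4^{24}+1}=3$, $\sqrt{g_4^{8}+1}=\sqrt{3}$, and $\sqrt{g_4^{16}-g_4^{8}+1}=\sqrt{3}$. Hence $\sqrt{g_4^{24}+1}-g_4^{12}=3-2\sqrt{2}$, which by \eqref{expli2} equals $(\sqrt{2}-1)^2$, so its fourth root is the factor $(\sqrt{2}-1)^{1/2}$. Similarly $\sqrt{g_4^{8}+1}-g_4^{4}=\sqrt{3}-\sqrt{2}$, whose square root is $(\sqrt{3}-\sqrt{2})^{1/2}$, and the nested radicals collapse to $\sqrt{(\sqrt{3}+3)/2}$ and $\sqrt{(\sqrt{3}+1)/2}$. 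Assembling these in \eqref{thcv} and \eqref{thcv1} yields the stated expressions for $S_1(e^{-6\pi})$ and $S_1(e^{-2\pi/3})$, the sign difference being inherited from the sign in Corollary \ref{nb1}.

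Next I would handle $n=8$ with $g_8=2^{1/8}(\sqrt{2}+1)^{1/8}$. Here the three basic radicals are precisely the identities \eqref{abikd}, namely $\sqrt{g_8^{24}+1}=5+4\sqrt{2}$, $\sqrt{g_8^{8}+1}=\sqrt{2}+1$, and $\sqrt{g_8^{16}-g_8^{8}+1}=3+\sqrt{2}$. Since $g_8^{12}=\sqrt{56+40\sqrt{2}}$ and $g_8^{4}=\sqrt{2+2\sqrt{2}}$, the two outer factors of \eqref{thcv} are $\bigl(5+4\sqrt{2}-\sqrt{56+40\sqrt{2}}\bigr)^{1/4}$ and $\bigl(\sqrt{2}+1-\sqrt{2+2\sqrt{2}}\bigr)^{1/2}$; the denesting identities \eqref{al172} and \eqref{al173} reduce each to $\sqrt{(\sqrt{2}+2)/2}-\sqrt{\sqrt{2}/2}$, so their product is the square of that quantity. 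The innermost factor simplifies directly to $\sqrt{(3\sqrt{2}+6)/2}-\sqrt{(3\sqrt{2}+4)/2}$, and the $+$ version gives the companion value, completing the $n=8$ cases.

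The computation is entirely mechanical once the class invariants are fixed, so the only genuine difficulty lies in the radical denesting for $n=8$, i.e. recognizing $5+4\sqrt{2}-\sqrt{56+40\sqrt{2}}$ and $\sqrt{2}+1-\sqrt{2+2\sqrt{2}}$ as a perfect fourth and square power of $\sqrt{(\sqrt{2}+2)/2}-\sqrt{\sqrt{2}/2}$. This is exactly the content of Lemma 9.10 of Berndt (applied with $r=5+4\sqrt{2}$, $t=(\sqrt{2}+1)/2$) already invoked in the proof of Theorem \ref{the5.2}, so I would simply cite \eqref{al172} and \eqref{al173} rather than redo the denesting. Everything else is routine rational arithmetic in the surds $\sqrt{2}$ and $\sqrt{3}$.
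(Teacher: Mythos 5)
Your proposal is correct and matches the paper's intent exactly: the paper's own (one-line) proof simply invokes \eqref{thcv} and \eqref{thcv1} and declares the computation analogous to the preceding theorems, which is precisely the substitution of $g_4=2^{1/8}$ and $g_8=2^{1/8}(\sqrt{2}+1)^{1/8}$ followed by the denestings \eqref{expli2}, \eqref{al172}, \eqref{al173} that you carry out. You have in fact supplied more detail than the paper does, and your surd arithmetic checks out.
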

\begin{proof}
The proof of theorem can be obtained by \eqref{thcv}, and \eqref{thcv1}. Since the proof is analogous to previous theorems, and so, we omit the details.
\end{proof}
\begin{theorem} We have
\begin{eqnarray*}
S_2\left(e^{-\pi3\sqrt{5}}\right)&=&\frac{1}{\sqrt{2}}
\left(\sqrt{\frac{\sqrt{5}+3}{4}}-\sqrt{\frac{\sqrt{5}-1}{4}}\right)^2
\left(\sqrt{\frac{3\sqrt{5}+7}{4}}-\sqrt{\frac{3\sqrt{5}+3}{4}}\right) ,\\
S_2\left(e^{-\pi\sqrt{5}/3}\right)&=&\frac{1}{\sqrt{2}}
\left(\sqrt{\frac{\sqrt{5}+3}{4}}-\sqrt{\frac{\sqrt{5}-1}{4}}\right)^2
\left(\sqrt{\frac{3\sqrt{5}+7}{4}}+\sqrt{\frac{3\sqrt{5}+3}{4}}\right) \\
S_2\left(e^{-\pi3\sqrt{7}}\right)&=&\frac{1}{\sqrt{2}}\left(\frac{3-\sqrt{7}}{\sqrt{2}}\right)^{1/2}\left(\frac{\sqrt{3}-1}{\sqrt{2}}\right)
\left(\sqrt{\frac{\sqrt{21}+5}{2}}-\sqrt{\frac{\sqrt{21}+3}{2}}\right) ,\\
S_2\left(e^{-\pi\sqrt{7}/3}\right)&=&\frac{1}{\sqrt{2}}\left(\frac{3-\sqrt{7}}{\sqrt{2}}\right)^{1/2}\left(\frac{\sqrt{3}-1}{\sqrt{2}}\right)
\left(\sqrt{\frac{\sqrt{21}+5}{2}}+\sqrt{\frac{\sqrt{21}+3}{2}}\right).
\end{eqnarray*}
\end{theorem}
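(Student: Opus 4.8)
The plan is to specialize Theorem \ref{he1nb351} to $n=5$ and $n=7$ and to feed in the two classical class-invariant values
\[
G_5=\left(\frac{1+\sqrt5}{2}\right)^{1/4},\qquad G_7=2^{1/4},
\]
both tabulated in \cite[p.189--204]{Berndt-notebook-5}. First I would record the powers and surds needed in \eqref{a1eqn2} and \eqref{a1eqn21}. Writing $\phi=\tfrac12(1+\sqrt5)$, for $n=5$ one has $G_5^4=\phi$, $G_5^8=\tfrac12(3+\sqrt5)$, $G_5^{12}=2+\sqrt5$, $G_5^{16}=\tfrac12(7+3\sqrt5)$ and $G_5^{24}=9+4\sqrt5$, so that $\sqrt{G_5^{24}-1}=2\sqrt{2+\sqrt5}$, $\sqrt{G_5^{8}-1}=\sqrt\phi$ and, crucially, $\sqrt{G_5^{16}+G_5^8+1}=\sqrt{6+2\sqrt5}=1+\sqrt5$. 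For $n=7$ the quantities are rational: $G_7^4=2$, $G_7^8=4$, $G_7^{12}=8$, $G_7^{16}=16$, $G_7^{24}=64$, giving $\sqrt{G_7^{24}-1}=3\sqrt7$, $\sqrt{G_7^{8}-1}=\sqrt3$ and $\sqrt{G_7^{16}+G_7^8+1}=\sqrt{21}$.

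Next I would substitute these into \eqref{a1eqn2} and \eqref{a1eqn21} and match the three factors of the stated answers. The innermost factor is the quickest: for $n=5$ the value $\sqrt{G_5^{16}+G_5^8+1}=1+\sqrt5$ collapses $\tfrac12\big(G_5^8\pm1+\sqrt{G_5^{16}+G_5^8+1}\big)$ to $\tfrac14(3\sqrt5+7)$ and $\tfrac14(3\sqrt5+3)$, producing the factor $\sqrt{\tfrac14(3\sqrt5+7)}\mp\sqrt{\tfrac14(3\sqrt5+3)}$; for $n=7$ the identical computation gives $\sqrt{\tfrac12(\sqrt{21}+5)}\mp\sqrt{\tfrac12(\sqrt{21}+3)}$, where the upper sign goes with $e^{-3\pi\sqrt n}$ and the lower with $e^{-\pi\sqrt n/3}$. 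What then remains is to simplify the product $\big(G_n^{12}-\sqrt{G_n^{24}-1}\big)^{1/4}\big(G_n^4-\sqrt{G_n^8-1}\big)^{1/2}$ of the two outer factors.

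The only delicate point, and the step I expect to be the main obstacle, is denesting these outer radicals. For $n=7$ each denests separately: $8-3\sqrt7=\tfrac12(3-\sqrt7)^2$ yields $\big(G_7^{12}-\sqrt{G_7^{24}-1}\big)^{1/4}=\big(\tfrac{3-\sqrt7}{\sqrt2}\big)^{1/2}$ and $2-\sqrt3=\tfrac12(\sqrt3-1)^2$ yields $\big(G_7^4-\sqrt{G_7^8-1}\big)^{1/2}=\tfrac{\sqrt3-1}{\sqrt2}$, which are exactly the two factors displayed for $n=7$. For $n=5$ I would instead combine the two factors using the golden-ratio identity $\phi^2+\phi=\phi^3=2+\sqrt5$: this gives $G_5^4-\sqrt{G_5^8-1}=\phi-\sqrt\phi$ and $G_5^{12}-\sqrt{G_5^{24}-1}=\phi^3-2\phi^{3/2}=(\phi-\sqrt\phi)^2$, so the product of the two outer factors is $(\phi-\sqrt\phi)^{1/2}(\phi-\sqrt\phi)^{1/2}=\phi-\sqrt\phi$; squaring then verifies $\big(\sqrt{\tfrac14(\sqrt5+3)}-\sqrt{\tfrac14(\sqrt5-1)}\big)^2=\phi-\sqrt\phi$, which is the form recorded in the statement. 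In every case the positive real branch of each nested square root is taken so that the continued fraction stays positive, and multiplying the three factors by the prefactor $1/\sqrt2$ assembles all four evaluations.
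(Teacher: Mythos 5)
Your proposal is correct and is exactly the computation the paper intends: the paper's proof simply says to substitute the tabulated invariants $G_5$ and $G_7$ into \eqref{a1eqn2} and \eqref{a1eqn21} and omits the details, which you have supplied (including the radical denestings $8-3\sqrt7=\tfrac12(3-\sqrt7)^2$, $2-\sqrt3=\tfrac12(\sqrt3-1)^2$, and the golden-ratio collapse for $n=5$), all of which check out.
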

\begin{proof}
Employing the class invariant $G_5,$ and $G_7$ (see \cite[p. 189]{Berndt-notebook-5}) in \eqref{a1eqn2}, and \eqref{a1eqn21}, we obtain all the above values. Since the proof is analogous to Theorem \ref{the5.1}, and Theorem \ref{the5.2} and so, omit the details.
\end{proof}


\begin{thebibliography}{00}

\bibitem{Berndt-notebook-3} B. C. Berndt, {\it Ramanujan's notebooks. Part III}, Springer-Verlag, New York, 1991.

\bibitem{Berndt-notebook-5} B. C. Berndt, {\it Ramanujan's notebooks. Part V}, Springer-Verlag, New York, 1998.

\bibitem{naikamahadeva} M. S. Mahadeva Naika, N. P. Suman\ and\ S. Chandankumar, Schl\"{a}fli-type mixed modular equations of degrees $1$, $3$, $n$ and $3n$, Afr. Diaspora J. Math. {\bf 18} (2015), no.~1, 55--76.


\bibitem{kgr4} K. G. Ramanathan, Hypergeometric series and continued fractions, Proc. Indian Acad. Sci. Math. Sci. {\bf 97} (1987), no.~1-3, 277--296 (1988). https://doi.org/10.1007/BF02837830

\bibitem{13} S. Ramanujan,   Collected Papers. Cambridge: Cambridge University Press 1927.

\bibitem{sr} S. Ramanujan, {\it Notebooks. Vols. 1, 2}, Tata Institute of Fundamental Research, Bombay, 1957.

\bibitem{sel} A. Selberg,  $\ddot{U}$ber einige arithmetische Identit$\ddot{a}$ten, Avh. Norske Vid.-Akad. Oslo I, Mat.-Natur. Kl., (1936), 2-23.


\bibitem{17}
         G. N. Watson, Theorems Stated by Ramanujan (XII): A Singular Modulus, J. London Math. Soc. {\bf 6} (1931), no.~1, 65--70.  https://doi.org/10.1112/jlms/s1-6.1.65

\bibitem{Yi-Thesis-2000} Yi J, Construction and Application of Modular Equations, Ph. D. Thesis, University of Illionis USA 2000.

\bibitem{zhan} L.-C. Zhang, Explicit evaluations of a Ramanujan-Selberg continued fraction, Proc. Amer. Math. Soc. {\bf 130} (2002), no.~1, 9--14. https://doi.org/10.1090/S0002-9939-01-06183-4



\end{thebibliography}
\end{document}